\documentclass[11pt]{extarticle}

\usepackage[T1]{fontenc}
\usepackage[british]{babel}
\usepackage{amsmath,amssymb,amsthm}
\usepackage{dsfont}
\usepackage{microtype}
\usepackage[top=1in, bottom=1.25in, left=0.75in, right=0.75in]{geometry}
\usepackage{authblk}
\usepackage{hyperref}

\hypersetup{
  colorlinks=true,
  linkcolor=blue,
  citecolor=blue,
  urlcolor=blue
}

\newtheorem{theorem}{Theorem}[section]
\newtheorem{lemma}[theorem]{Lemma}
\newtheorem{corollary}[theorem]{Corollary}

\theoremstyle{definition}

\newtheorem{proposition}[theorem]{Proposition}

\theoremstyle{remark}

\numberwithin{equation}{section}

\newcommand{\C}{\mathbb{C}}
\newcommand{\R}{\mathbb{R}}
\newcommand{\N}{\mathbb{N}}
\newcommand{\Sn}{\mathfrak{S}}
\newcommand{\one}{\mathbf 1}

\newcommand{\triv}{\mathrm{triv}}
\newcommand{\ip}[2]{\left\langle #1,\,#2\right\rangle}
\newcommand{\norm}[1]{\left\lVert #1\right\rVert}
\newcommand{\abs}[1]{\left\lvert #1\right\rvert}
\newcommand{\set}[1]{\left\{#1\right\}}
\newcommand{\Std}{\operatorname{Std}}
\newcommand{\Reg}{\operatorname{Reg}}
\newcommand{\Irr}{\operatorname{Irr}}

\newcommand{\Sph}{\operatorname{Sph}}

\newcommand{\len}{\ell}

\newcommand{\eps}{\varepsilon}
\newcommand{\Schur}{\mathbb S}

\DeclareMathOperator{\End}{End}
\DeclareMathOperator{\Ind}{Ind}
\DeclareMathOperator{\Symm}{Sym}

\begin{document}

\title{Random walks on wreath products \\and spectral gaps for coloured interchange processes}

\author{ Haoran Zhu\thanks{Division of Mathematical Sciences, School of Physical and Mathematical Sciences, Nanyang Technological University, 21 Nanyang Link, Singapore 637371. Email: zhuh0031@e.ntu.edu.sg}}

\date{}

\maketitle

\begin{abstract}
We introduce group-valued coloured interchange processes, a class of continuous-time random walks on wreath products generated by transpositions and arbitrary symmetric base-group transitions.  We give a complete representation-theoretic characterisation of their spectral gaps by decomposing the associated quasi-regular representation and determining precisely which irreducible representations are required.  For abelian base groups, we realise these representations on multislices and identify their Laplacians with discrete Schr\"odinger operators.  We then classify the minimal families of irreducible representations that determine the spectral gap for every choice of transition rates.
\end{abstract}
\noindent\textbf{Keywords:}  Spectral gap, coloured interchange process, wreath product, spherical representation, multislice.

\medskip

\noindent\textbf{2020 Mathematics Subject Classification:} 60J27, 20C15, 05C50, 60K35.

\section{Introduction}
Let $G$ be a finite group, let $n\geqslant2$, and write $W_n(G)=G\wr\Sn_n=G^n\rtimes\Sn_n$.  We introduce a class of continuous-time random walks on $W_n(G)$.  From a state $u\in W_n(G)$, the walk jumps to $(ij)u$ at rate $\mathsf a_{ij}$ and to $gu$ at rate $\mathsf c_g$.  The rates are encoded by
\begin{equation}\label{eq:intro-weight}
w=w_T+w_N,
\qquad
w_T=\sum_{1\leqslant i<j\leqslant n}\mathsf a_{ij}(ij),
\qquad
w_N=\sum_{g\in G^n}\mathsf c_g\,g,
\end{equation}
where $\mathsf a_{ij},\mathsf c_g\geqslant0$ and $\mathsf c_g=\mathsf c_{g^{-1}}$.  We call such a walk a group-valued \textbf{coloured interchange process}.  The term coloured interchange process has also been used for a different recolouring process~\cite[Section~2.3]{TranThesis}; in the present setting the internal states are group-valued, and one transition may act on several coordinates.  We determine its spectral gap for every choice of the rates in \eqref{eq:intro-weight}.

Our starting point is \textit{Aldous' spectral gap conjecture}.  For a weighted graph on $n$ vertices, the interchange process is the continuous-time random walk on $\Sn_n$ that exchanges the labels at the ends of each edge at the prescribed rate.  Aldous~\cite{Aldous} conjectured that its spectral gap equals that of the corresponding one-particle random walk, and Caputo, Liggett, and Richthammer proved this for every weighted graph~\cite[Theorem~1.1]{CaputoLiggettRichthammer}.

Equivalently, for every non-negative linear combination of transpositions, the least non-zero eigenvalue in the regular representation of $\Sn_n$ is attained by the standard representation.  This formulation has motivated extensions to other elements of $\R[\Sn_n]$ and to other groups; see~\cite{AlonKozma,Cesi2016,ParzanchevskiPuder,AlonKozmaPuder}.

Wreath products provide a natural setting for interchange processes with internal states.  Random walks and mixing on wreath products were studied by Schoolfield~\cite{Schoolfield} and by Fill and Schoolfield~\cite[Sections~1.1--1.2]{FillSchoolfield}; the latter gave a probabilistic reduction for broad classes of Markov chains on $G\wr\Sn_n$ and related homogeneous spaces.

Recently, Cesi proved an Aldous-type theorem for $C_2\wr\Sn_n$ when the base-group part is supported on elements that are non-trivial in only one coordinate~\cite[Theorem~1.2]{Cesi2020}.  Ghosh extended this result to $G\wr\Sn_n$ for arbitrary finite $G$~\cite[Theorem~1.1]{Ghosh}.  Alon and Ghosh subsequently allowed arbitrary symmetric elements of $C_2^n$ and characterised the irreducible representations required to determine the gap~\cite[Theorems~1 and~3]{AlonGhosh}.  Further Aldous-type results for generalised symmetric groups and unitary groups appear in~\cite{LevhariPuder,AlonPuder}.

We allow the base-group term to be supported on arbitrary elements of $G^n$, so one transition may act on several coordinates.  This includes the single-coordinate processes above and, when $G=C_2$, the unrestricted signed interchange process of Alon and Ghosh~\cite[Theorem~1]{AlonGhosh}.  For abelian $G$, we diagonalise the base-group Laplacian by Fourier transform; for general $G$, we use the spherical Hecke algebra.

\subsection{Main results}

We now state the results precisely.  Let $\Std_n$ denote the standard representation of $\Sn_n$, regarded as a representation of $W_n(G)$ through the quotient $W_n(G)\to\Sn_n$.  Write $\mathcal P_n(G)=\Ind_{\Sn_n}^{W_n(G)}\triv$ for the quasi-regular representation on $W_n(G)/\Sn_n$, and let $\mathcal P_n^0(G)$ be the orthogonal complement of its one-dimensional fixed subspace.

\begin{theorem}\label{thm:intro-reduction}
Let $G$ be a finite group, let $n\geqslant2$, and let $w$ be as in \eqref{eq:intro-weight}.  Then
\[
\psi_{W_n(G)}(w)
=
\psi_{W_n(G)}\bigl(w,\Std_n\oplus \mathcal P_n^0(G)\bigr).
\]
\end{theorem}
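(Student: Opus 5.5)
The plan is to interpret $\psi_{W_n(G)}(w,V)$ as the smallest eigenvalue of the Laplacian $\Delta_w=\sum_{i<j}\mathsf a_{ij}(1-(ij))+\sum_{g}\mathsf c_g(1-g)$ on the non-trivial part of $V$. Write $\mathcal{R}$ for the regular representation $\C[W_n(G)]$. Since $\Delta_w$ is positive semidefinite and commutes with the right regular action, $\psi_{W_n(G)}(w)=\min_\rho\psi(w,\rho)$, where $\rho$ runs over the non-trivial irreducibles of $W_n(G)$. Both $\Std_n$ and $\mathcal P_n^0(G)$ occur in $\mathcal{R}$, so the inequality ``$\leqslant$'' for the right-hand side is automatic. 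The task is therefore to show that every non-trivial irreducible $\rho$ has $\psi(w,\rho)\geqslant\min\{\psi(w,\Std_n),\psi(w,\mathcal P_n^0(G))\}$.

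\emph{Step 1 (the two easy families).} If $\rho$ factors through $W_n(G)\to\Sn_n$, then $w_N$ acts by the scalar $\sum_g\mathsf c_g$, so $\Delta_w$ restricts to the pure interchange Laplacian on $\rho$. The Caputo--Liggett--Richthammer theorem \cite[Theorem~1.1]{CaputoLiggettRichthammer} then gives $\psi(w,\rho)\geqslant\psi(w,\Std_n)$. If $\rho$ is spherical, i.e.\ contains an $\Sn_n$-fixed vector, then $\rho\subseteq\mathcal P_n(G)$ by Frobenius reciprocity, and there is nothing to prove.

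\emph{Step 2 (reduction for the remaining irreducibles).} Every other $\rho$ is, by Clifford--Mackey theory, induced from a stabiliser $W_{\lambda}=\prod_k G\wr\Sn_{n_k}$ of a $G^n$-isotypic type. I would argue by induction on $n$, following the strategy of Ghosh \cite{Ghosh} and Alon--Ghosh \cite{AlonGhosh}. Realise $\mathcal{R}$ as functions on $W_n(G)$. Split off the last coordinate via the subgroup $W_{n-1}(G)\times G$, and decompose $w$ as $w^{(n-1)}+(\text{terms touching }n)$. The key inequality I aim for is an octopus-type comparison: on $\mathcal{R}$, the operator
\[
\Delta_w-\Delta_{w'}\succeq \bigl(\Delta_{w}-\Delta_{w'}\bigr)\big|_{\text{one-particle}},
\]
where $w'$ is the ``star--mesh'' reduced weight on $W_{n-1}(G)$. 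It is obtained by eliminating vertex $n$ exactly as in the electric-network reduction of \cite{CaputoLiggettRichthammer}. The base-group terms $\mathsf c_g$ that touch coordinate $n$ are pushed to their restriction on the first $n-1$ coordinates. Combined with the induction hypothesis applied to $w'$, this yields $\psi(w,\rho)\geqslant\min\{\psi(w,\Std_n),\psi(w,\mathcal P_n^0(G))\}$ for every $\rho$ whose restriction to $W_{n-1}(G)$ contains no trivial summand. The irreducibles that do contain a $W_{n-1}(G)$-trivial vector have a one-particle description: they live inside functions on $W_n(G)/W_{n-1}(G)$, i.e.\ pairs (position, colour). Those I would check directly to be summands of $\Std_n\oplus\mathcal P_n^0(G)$.

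\emph{Main obstacle.} I expect the hardest step to be the octopus-type inequality in Step 2, because a single transition $g\in G^n$ may act on several coordinates simultaneously. This breaks the single-site structure used in \cite{Ghosh}. My first attempt would be to write each $1-g$ as a positive combination of commuting projections $1-\pi_\chi$, using the spherical Hecke algebra of $(W_n(G),\Sn_n)$ (or the Fourier transform on $G^n$ when $G$ is abelian). I would then verify the inequality one $G^n$-isotypic component at a time, reducing it to the classical octopus inequality for $\Sn_n$ twisted by a character. If this fails, the fallback is a direct comparison: use the projection of $\rho$ onto its $G^n$-isotypic pieces to construct, for each eigenvector in $\rho$, a test function in $\mathcal P_n^0(G)$ with no larger Rayleigh quotient.
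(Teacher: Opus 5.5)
Your reduction to irreducibles, the ``$\leqslant$'' direction and both cases of Step~1 are correct, but Step~2 is not a proof. The octopus-type inequality for the star--mesh weight $w'$ is only announced. Its precise form is never fixed: you do not say what the ``one-particle'' part of $\Delta_w-\Delta_{w'}$ is when one $g\in G^n$ moves several coordinates, and you yourself flag this as the step most likely to fail. Even granting the inequality, the induction would still have to compare the gap of $w'$ on $\mathcal P^0_{n-1}(G)$ with the gap of $w$ on $\mathcal P^0_n(G)$, which you do not address. The fallback comparison is also left unspecified. So every non-spherical irreducible that is not lifted from $\Sn_n$ remains uncovered.

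The missing idea is that you never need to use the base-group term; you may simply discard it. The irreducibles not handled by Frobenius reciprocity are exactly those with $\rho^{\Sn_n}=0$, and the non-trivial lifted ones are among them. For such $\rho$, every irreducible constituent of $\rho\!\downarrow_{\Sn_n}$ is non-trivial. Applying Theorem~\ref{thm:aldous-form} constituent by constituent gives $\rho(\Delta(w_T))\succeq\psi_{\Sn_n}(w_T,\Std_n)I$ on all of $\rho$, while $\rho(\Delta(w_N))\succeq0$ by \eqref{eq:dirichlet-form}. Hence $\psi(w,\rho)\geqslant\psi_{\Sn_n}(w_T,\Std_n)=\psi(w,\Std_n)$, where the equality holds because $G^n$ acts trivially on $\Std_n$. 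This is the paper's Proposition~\ref{prop:domination}, and it makes the induction, the Clifford--Mackey analysis and the octopus inequality unnecessary. Your Step~1 argument for lifted representations already contains it, once you notice that it uses only the absence of $\Sn_n$-fixed vectors and the positivity of the base-group Laplacian, not the vanishing of $\rho(\Delta(w_N))$.
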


The representation in Theorem~\ref{thm:intro-reduction} has dimension $\abs{G}^n+n-2$, whereas the regular representation has dimension $\abs{G}^n n!$.

We next decompose $\mathcal P_n(G)$.  Put $\Gamma=\Irr(G)$, and let $d_\gamma$ be the degree of $\gamma\in\Gamma$.  A $G$-multipartition of $n$ is a family $\Lambda=(\lambda^\gamma)_{\gamma\in\Gamma}$ of partitions with total size $n$; write $V_\Lambda$ for the corresponding irreducible representation of $W_n(G)$.  We use $\Schur_\lambda(\C^d)$ for the irreducible polynomial $\mathrm{GL}_d(\C)$-module of highest weight $\lambda$, with value zero when $\len(\lambda)>d$; see~\cite[Section~6.1]{FultonHarris}.

\begin{theorem}
Let $\Lambda=(\lambda^\gamma)_{\gamma\in\Gamma}$ be a $G$-multipartition of $n$.  Then
\[
\dim V_\Lambda^{\Sn_n}
=
\prod_{\gamma\in\Gamma}
\dim \Schur_{\lambda^\gamma}(\C^{d_\gamma}).
\]
Moreover, $V_\Lambda$ occurs in $\mathcal P_n(G)$ if and only if $\len(\lambda^\gamma)\leqslant d_\gamma$ for every $\gamma\in\Gamma$.
\end{theorem}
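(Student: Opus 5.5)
The plan is to use the Clifford-theoretic construction of $V_\Lambda$ and compute its $\Sn_n$-invariants by Frobenius reciprocity and Schur--Weyl duality. Recall the construction. Write $n_\gamma=\abs{\lambda^\gamma}$, let $Y=\prod_\gamma \Sn_{n_\gamma}$, and let $W_\Lambda=G^n\rtimes Y$. The representation
\[
\bigotimes_\gamma\bigl(\gamma^{\otimes n_\gamma}\otimes S^{\lambda^\gamma}\bigr)
\]
is a representation of $W_\Lambda$: here $\Sn_{n_\gamma}$ permutes the tensor factors of $\gamma^{\otimes n_\gamma}$ and acts on the Specht module $S^{\lambda^\gamma}$. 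Then
\[
V_\Lambda=\Ind_{W_\Lambda}^{W_n(G)}\Bigl(\bigotimes_\gamma\bigl(\gamma^{\otimes n_\gamma}\otimes S^{\lambda^\gamma}\bigr)\Bigr).
\]
By Frobenius reciprocity, $\dim V_\Lambda^{\Sn_n}$ equals the multiplicity of $V_\Lambda$ in $\mathcal P_n(G)=\Ind_{\Sn_n}^{W_n(G)}\triv$. So the second assertion follows from the first once we show that $\dim\Schur_\lambda(\C^d)\neq0$ exactly when $\len(\lambda)\leqslant d$. That fact is standard.

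To compute $\dim V_\Lambda^{\Sn_n}=\dim\operatorname{Res}_{\Sn_n}V_\Lambda^{\Sn_n}$, apply Mackey's formula to $\operatorname{Res}_{\Sn_n}\Ind_{W_\Lambda}^{W_n(G)}$. Since $W_n(G)=G^n\Sn_n$ and $G^n\subseteq W_\Lambda$, we get $W_\Lambda\Sn_n=W_n(G)$. Hence there is a single double coset, and $\Sn_n\cap W_\Lambda=Y$. Mackey therefore gives
\[
\operatorname{Res}_{\Sn_n}V_\Lambda\cong\Ind_Y^{\Sn_n}\operatorname{Res}_Y\Bigl(\bigotimes_\gamma\bigl(\gamma^{\otimes n_\gamma}\otimes S^{\lambda^\gamma}\bigr)\Bigr).
\]
Applying Frobenius reciprocity again, the dimension of $\Sn_n$-invariants equals the dimension of $Y$-invariants in the inner module. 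The inner module factorises over $\gamma$, so
\[
\dim V_\Lambda^{\Sn_n}=\prod_\gamma\dim\bigl(\gamma^{\otimes n_\gamma}\otimes S^{\lambda^\gamma}\bigr)^{\Sn_{n_\gamma}}.
\]
In each factor, $\Sn_{n_\gamma}$ acts diagonally, by place permutation on $(\C^{d_\gamma})^{\otimes n_\gamma}$ and on $S^{\lambda^\gamma}$.

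Now use Schur--Weyl duality:
\[
(\C^d)^{\otimes m}\cong\bigoplus_{\mu\vdash m,\ \len(\mu)\leqslant d}\Schur_\mu(\C^d)\otimes S^\mu
\]
as $\mathrm{GL}_d\times\Sn_m$-modules. Specht modules are self-dual, with real characters. So the $\Sn_m$-invariants of $S^\mu\otimes S^\lambda$ have dimension $\delta_{\mu\lambda}$. Therefore
\[
\dim\bigl((\C^d)^{\otimes m}\otimes S^\lambda\bigr)^{\Sn_m}=\dim\Schur_\lambda(\C^d),
\]
which is zero when $\len(\lambda)>d$. This yields the product formula. The nonvanishing of $\dim\Schur_\lambda(\C^d)$ for $\len(\lambda)\leqslant d$ follows from Weyl's dimension formula, or from the existence of semistandard tableaux of shape $\lambda$ with entries at most $d$.

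The main obstacle is bookkeeping rather than depth. We must make sure the paper's convention for $V_\Lambda$ matches the one above, including whether it uses $S^{\lambda^\gamma}$ or its transpose and whether it uses $\gamma$ or $\gamma^*$. We must also check that the place-permutation action on $\gamma^{\otimes n_\gamma}$ is the one compatible with the semidirect product, and that the Mackey step is legitimate. A conjugation twist would not change the dimensions, since we restrict the permutation module to the same $\Sn_{n_\gamma}$ regardless of which $\gamma$ is used. If the convention uses transposed partitions, I would first check that the statement as given matches that convention, replacing $\lambda$ by $\lambda'$ where needed.
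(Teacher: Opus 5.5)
Your proof is correct, and it takes a genuinely different route from the paper's.

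\textbf{The paper's route.} The paper works globally on the permutation module. It writes $\mathcal P_n(G)\cong\C[G]^{\otimes n}$ and uses the bimodule decomposition $\C[G]\cong\bigoplus_\gamma U_\gamma\otimes U_\gamma^*$. Grouping tensor positions by isotypic type gives modules induced from $\prod_\gamma G\wr\Sn_{m_\gamma}$. It then applies Schur--Weyl duality to the factor $(U_\gamma^*)^{\otimes m_\gamma}$, on which $G^n$ acts trivially. Each summand is thereby recognised as $V_\Lambda$ tensored with the multiplicity space $\bigotimes_\gamma\Schur_{\lambda^\gamma}(U_\gamma^*)$. Frobenius reciprocity finally converts this multiplicity into $\dim V_\Lambda^{\Sn_n}$.

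\textbf{Your route.} You work one irreducible at a time. Mackey's formula (with a single double coset, because $G^n\subseteq W_\Lambda$) and two applications of Frobenius reciprocity reduce $\dim V_\Lambda^{\Sn_n}$ to
\[
\prod_\gamma\dim\operatorname{Hom}_{\Sn_{n_\gamma}}\bigl(S^{\lambda^\gamma},(\C^{d_\gamma})^{\otimes n_\gamma}\bigr).
\]
Schur--Weyl duality on the symmetric-group side then finishes the computation.

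\textbf{Comparison.} Your argument is more direct for the stated dimension formula. It never has to identify induced summands of $\mathcal P_n(G)$ with copies of $V_\Lambda$, a step the paper compresses into a single comparison with \eqref{eq:wreath-irrep-construction}. The price is that you need self-duality of Specht modules. The paper avoids that, because applying Schur--Weyl to the dual factor pairs $S^{\lambda}$ with $U_\gamma^{\otimes m}$ exactly as in the construction. In return, the paper obtains the full decomposition of $\mathcal P_n(G)$ with explicit multiplicity spaces, which is the form recorded in Theorem~\ref{thm:coset-decomposition}.

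\textbf{Your bookkeeping worries.} These are settled by \eqref{eq:wreath-irrep-construction}. That construction uses $U_\gamma$ (not its dual), the untransposed $S^{\lambda^\gamma}$, and the plain place-permutation action on $U_\gamma^{\otimes\abs{\lambda^\gamma}}$. So your computation applies verbatim, with no substitution $\lambda\mapsto\lambda'$.
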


When $G$ is abelian, write $\widehat G$ for its character group.  The spherical multipartitions have one row in each component and are indexed by multiplicity vectors $\kappa=(\kappa_\chi)_{\chi\in\widehat G}$ satisfying $\sum_\chi\kappa_\chi=n$.  We write $V_\kappa$ for the corresponding irreducible representation.

\begin{theorem}
Let $G$ be a finite abelian group, let $n\geqslant2$, and let $w$ be as in \eqref{eq:intro-weight}.  Then
\[
\psi_{G\wr\Sn_n}(w)
=
\min\left\{
\psi(w,\Std_n),
\min_{\substack{\sum_{\chi\in\widehat G}\kappa_\chi=n\\
V_\kappa\neq\triv}}
\psi(w,V_\kappa)
\right\}.
\]
\end{theorem}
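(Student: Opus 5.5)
The plan is to combine Theorem~\ref{thm:intro-reduction} with the decomposition of $\mathcal P_n(G)$ from the preceding theorem. For abelian $G$ we have $\Gamma=\widehat G$ and $d_\chi=1$ for every $\chi$.

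\textbf{Step 1: the reduction.} Theorem~\ref{thm:intro-reduction} gives
\[
\psi_{G\wr\Sn_n}(w)=\psi\bigl(w,\Std_n\oplus\mathcal P_n^0(G)\bigr).
\]
The quantity $\psi(w,V)$ for a representation $V$ is the least eigenvalue of the Laplacian $\rho_V(\|w\|\cdot e - w)$ on $V$, and it is additive under direct sums in the sense of taking minima. So it suffices to decompose $\mathcal P_n^0(G)$ into irreducibles and take the minimum of $\psi(w,\cdot)$ over the constituents together with $\Std_n$.

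\textbf{Step 2: constituents of $\mathcal P_n(G)$.} By the preceding theorem, $V_\Lambda$ occurs in $\mathcal P_n(G)$ if and only if $\len(\lambda^\chi)\leqslant 1$ for all $\chi$. Hence each $\lambda^\chi$ is a single row $(\kappa_\chi)$, possibly empty, and $\sum_\chi\kappa_\chi=n$. These are exactly the $V_\kappa$.

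The multiplicity of $V_\kappa$ equals $\dim V_\kappa^{\Sn_n}$ by Frobenius reciprocity, and the theorem gives $\prod_\chi\dim\Schur_{(\kappa_\chi)}(\C)=1$. Thus $\mathcal P_n(G)=\bigoplus_\kappa V_\kappa$ is multiplicity-free, and the trivial representation appears exactly once, as $\kappa_{\mathbf 1}=n$. Removing the fixed line leaves $\mathcal P_n^0(G)=\bigoplus_{V_\kappa\neq\triv}V_\kappa$. Multiplicities do not matter for $\psi$ anyway.

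\textbf{Step 3: combine.} Taking the minimum over the summands of $\Std_n\oplus\bigoplus_{V_\kappa\ne\triv}V_\kappa$ gives the displayed formula.

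The only point needing care is the identification $\dim\Schur_{(m)}(\C)=1$ for every $m$, with the convention that rows of length zero give the empty partition and $\Schur_\emptyset=\C$. Together with matching the paper's indexing of $V_\kappa$ to single-row multipartitions, this is the main bookkeeping step. There is no real obstacle: the substance lies in the two earlier theorems.
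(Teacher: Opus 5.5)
Your proposal is correct and follows the paper's route: the reduction of Theorem~\ref{thm:universal-reduction} (equivalently Theorem~\ref{thm:intro-reduction}), combined with Theorem~\ref{thm:coset-decomposition} specialised to $d_\chi=1$ (Corollary~\ref{cor:cyclic-row-multipartitions}), which shows that the non-trivial spherical irreducible representations are exactly the non-trivial $V_\kappa$. One small imprecision is harmless here: $\psi(w,V)$ is the least eigenvalue of $V(\Delta(w))$ on $(V^{H})^\perp$ rather than on all of $V$, but $\Std_n\oplus\mathcal P_n^0(G)$ has no non-zero fixed vectors, so the two coincide.
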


When $G$ is abelian, the quasi-regular representation is multiplicity-free.  We realise each of its irreducible summands on a multislice, a space that also occurs in the study of transposition chains and logarithmic Sobolev inequalities~\cite[Section~1]{FilmusODonnellWu} and~\cite[Section~1.1]{Salez}.  The resulting Laplacian is a discrete Schr\"odinger operator, namely a graph Laplacian plus a potential~\cite[p.~141]{SySunada}.  Fourier inversion then gives strictly positive weights that separate any prescribed duality orbit.

Our final main result establishes sharpness.

\begin{theorem}\label{thm:intro-minimality}
Let $G$ be a finite group and let $V$ be a non-trivial $\Sn_n$-spherical irreducible representation of $W_n(G)$.  Then there is an element $w$ of the form \eqref{eq:intro-weight} for which $\psi_{W_n(G)}(w,U)=\psi_{W_n(G)}(w)$ holds precisely when $U\cong V$ or $U\cong V^*$.
\end{theorem}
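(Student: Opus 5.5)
We will construct $w$ with no transposition part, $w=w_N$. Then $w$ acts trivially on $\Std_n$ only through the quotient, and that is the key point. Since $w_N$ lies in $\R[G^n]$, it maps to $\sum_g \mathsf c_g$ times the identity in $\Sn_n$. The Laplacian $\sum\mathsf c_g-w$ therefore vanishes on every representation pulled back from $\Sn_n$. So $\psi(w,\Std_n)$ is interpreted as the least non-zero eigenvalue; to avoid this degenerate issue we instead add a small multiple $\eps w_T^{\mathrm{all}}$ of all transpositions. By Theorem~\ref{thm:intro-reduction}, only $\Std_n$ and the summands of $\mathcal P_n^0(G)$ matter. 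The goal is to choose $\mathsf c_g$ so that the least eigenvalue of the Laplacian on $\mathcal P_n^0(G)$ is attained exactly on the isotypic components of $V$ and $V^*$, and is strictly smaller than on all other spherical irreducibles and on $\Std_n$.

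The plan uses the spherical Hecke algebra. The space $\mathcal P_n(G)=\C[W_n(G)/\Sn_n]$ restricts to $G^n$ as the regular representation $\C[G^n]$. The operator $w_N$ commutes with the $\Sn_n$-action when $\mathsf c$ is $\Sn_n$-invariant. Take $\mathsf c$ to be a class function on $G^n$ that is invariant under coordinate permutations. Then $w_N$ is central in $\R[W_n(G)]$ and acts on each irreducible $V_\Lambda$ by the scalar $\sum_g\mathsf c_g\chi_\Lambda(g)/\dim V_\Lambda$. Restricted to $G^n$, $V_\Lambda$ is a sum of $\bigotimes_\gamma \gamma^{\otimes\lambda^\gamma}$-type conjugates, so this scalar equals $\sum_g \mathsf c_g\,\chi_{\rho}(g)/\dim\rho$. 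Here $\rho=\bigotimes_i\gamma_i$ is the $G^n$-irreducible determined by the multiset $\mu_\Lambda=(|\lambda^\gamma|)_\gamma$, since $\mathsf c$ is symmetric. Hence the eigenvalue on $V_\Lambda$ depends only on $\mu_\Lambda$, and is given by the Fourier transform $\hat{\mathsf c}(\mu)$.

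This exposes the main obstacle. For non-abelian $G$, or when $\len(\lambda^\gamma)\geqslant2$, several spherical $V_\Lambda$ share the same content $\mu$. Central base weights cannot separate them, so the transposition part must do the separating. I would proceed in two stages. First, by Fourier inversion on the commutative algebra of symmetric class functions, choose strictly positive $\mathsf c$ (a large constant plus a small perturbation) such that $1-\hat{\mathsf c}(\mu)/\hat{\mathsf c}(0)$ is minimised exactly at $\mu=\mu_V$ and $\mu_{V^*}$. Here $\mu_{V^*}$ is the content with each $\gamma$ replaced by $\gamma^*$. This works because the symmetrised characters separate these multisets, and the real, symmetric condition forces pairing with the dual. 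Second, add $\eps\sum_{i<j}(ij)$, whose action on $V_\Lambda$ is a central scalar, namely the content sum $\sum_\gamma d_\gamma\,\mathrm{ct}(\lambda^\gamma)$ suitably normalised. Among $\Lambda$ with fixed $\mu$ and $\len(\lambda^\gamma)\le d_\gamma$, this scalar is uniquely maximised by the one-row shapes when the sign of $\eps$ is chosen appropriately. If $V$ itself is not of maximal content, use instead a non-negative class combination in $\R[\Sn_n]$-span pulled through suitable products, or weight transpositions by a base element $g$ supported on two coordinates. In that case the separation is by the spherical function values of $V_\Lambda$ on such elements. I expect this separation of same-content spherical representations by positive weights to be the hardest step. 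I would first attempt it by showing that the spherical functions of distinct spherical $V_\Lambda$ are linearly independent on the double cosets $\Sn_n\backslash W_n(G)/\Sn_n$. This is Gelfand-pair-style orthogonality. Positive separating weights would then follow by the same large-constant-plus-perturbation Fourier inversion.

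Finally, I would take $\eps$ small enough that $\Std_n$, whose eigenvalue is of order $\eps n$, still exceeds the minimum. This is arranged by scaling the base perturbation relative to $\eps$, or by checking that the Laplacian gap on $\mathcal P_n^0$ at $V$ is below $\eps n$ after rescaling. With that, $\psi(w,U)=\psi(w)$ holds exactly for $U\cong V,V^*$.
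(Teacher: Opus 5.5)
The gap is in your second stage, and it cannot be closed while the base weights are central. Your claim that $\eps\sum_{i<j}(ij)$ acts on $V_\Lambda$ by a scalar is false. $T_n$ is central in $\C[\Sn_n]$ but not in $\C[W_n(G)]$, and $\Delta(T_n)$ is zero on $V_\Lambda^{\Sn_n}$ but at least $n$ on every other $\Sn_n$-isotypic component.

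This defeats the whole strategy. If $w_N$ is central, then on a spherical $V_\Lambda\neq\triv$ the Laplacian is $\Delta(w_T)+s_{\mu_\Lambda}I$. Since $\Delta(w_T)$ is positive semidefinite and vanishes on $V_\Lambda^{\Sn_n}\neq0$, you get $\psi(w,V_\Lambda)=s_{\mu_\Lambda}$ for \emph{every} choice of rates $\mathsf a_{ij}$. So spherical representations of equal content always have the same gap. An example is $\lambda^\gamma=(2)$ versus $\lambda^\gamma=(1,1)$ with $d_\gamma=2$ and $n=2$, which are not dual to each other.

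Your fallbacks do not rescue this. Elements $g(ij)$ and non-transposition classes are not of the form \eqref{eq:intro-weight}. The sign of $\eps$ is not free, because $\mathsf a_{ij}\geqslant0$. Scalar spherical functions are unavailable, since $(W_n(G),\Sn_n)$ is a Gelfand pair only for abelian $G$. A bi-$\Sn_n$-invariant separating function would also live on all of $W_n(G)$, not just on $G^n$ and the transpositions.

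Your final step is reversed as well. $\psi(w,\Std_n)=\eps n$ exactly, while the gap at $V$ is at least roughly the large base constant $C\abs{G}^n$. A small $\eps$ therefore makes $\Std_n$ the unique minimiser.

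The missing idea is to drop centrality and use the full spherical Hecke algebra.
\begin{itemize}
\item Because $W_n(G)=G^n\Sn_n$, one has $e_n\C[W_n(G)]e_n=e_n\C[G^n]e_n\cong\bigoplus_{U\in\Sph_n(G)}\End(U^{\Sn_n})$.
\item So the central idempotent $p_{\mathcal O}$ selecting the blocks of $V$ and $V^*$ can be written $p_{\mathcal O}=e_nbe_n$, with $b\in\R[G^n]$ commuting with $\Sn_n$ and $b_x=b_{x^{-1}}$. To get $b$, average a lift over $\Sn_n$-conjugation, over $a\mapsto a^*$, and over coefficient conjugation. The last fixes $p_{\mathcal O}$ because it swaps the $U$- and $U^*$-blocks.
\item Such a $b$ is generally not central in $\C[G^n]$, and that is exactly what lets it distinguish equal-content representations.
\item Set $w_N=\sum_x(C+\eta b_x)\,x$ with $\eta$ small, and take a \emph{large} uniform transposition rate $w_T=C\abs{G}^nT_n/n$.
\item Since $b$ commutes with $\Sn_n$, the Laplacian respects the $\Sn_n$-isotypic decomposition. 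By Lemma~\ref{lem:complete-transposition-gap}, it is strictly greater than $C\abs{G}^n$ on every non-trivial isotypic component of every irreducible $U\not\cong\Std_n$, and it equals $C\abs{G}^n$ on $\Std_n$.
\item On $U^{\Sn_n}$, for spherical $U\neq\triv$, it equals $C\abs{G}^nI-\eta\,U(p_{\mathcal O})$. This uses $\sum_xb_x=0$, which holds because $p_{\mathcal O}$ vanishes on the trivial block.
\end{itemize}
The minimum $C\abs{G}^n-\eta$ is therefore attained exactly on $V$ and $V^*$. For abelian $G$, your first stage is essentially the paper's Fourier-inversion argument, once the transposition rate is taken large rather than small.
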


The base-group weights in Theorem~\ref{thm:intro-minimality} may be chosen strictly positive, and all transpositions may be given the same positive weight.  If $V\cong V^*$, then no other non-trivial irreducible representation has the same gap; the same conclusion holds for $\Std_n$.

Consequently, a minimal gap-determining family consists of $\Std_n$ together with one representative from each duality orbit of non-trivial spherical irreducible representations.  For an abelian group, duality sends $(\kappa_\chi)_\chi$ to $(\kappa_{\chi^{-1}})_\chi$.  When $G=C_2$, every multiplicity vector is fixed, and we recover the unrestricted Alon--Ghosh characterisation~\cite[Theorem~3]{AlonGhosh}.

In Section~\ref{sec:preliminaries}, we fix the notation for representation Laplacians and wreath products.  In Section~\ref{sec:reduction}, we reduce the spectral-gap problem to the lifted standard representation and the spherical irreducible representations.  In Section~\ref{sec:constituents}, we decompose the quasi-regular representation.  In Section~\ref{sec:cyclic-blocks}, we give the multislice realisation for abelian base groups.  In Section~\ref{sec:minimality}, we prove sharpness and classify the minimal gap-determining families.

\section{Weighted Laplacians and wreath products}\label{sec:preliminaries}

This section fixes the notation for finite-group Laplacians, wreath products, and the quasi-regular representation used throughout.

\subsection{Laplacians in finite group algebras}

All representations are finite-dimensional over $\C$ and are taken to be unitary; see~\cite[Section~4.3]{Serre}.  Our conventions for representation Laplacians and spectral gaps follow~\cite[Sections~1--2]{Cesi2016}.

Let $H$ be a finite group.  We call $w=\sum_{h\in H}\mathsf c_h\,h\in\R[H]$ symmetric and non-negative when $\mathsf c_h\geqslant0$ and $\mathsf c_h=\mathsf c_{h^{-1}}$ for every $h\in H$, and define $\Delta_H(w)=\sum_{h\in H}\mathsf c_h(1_H-h)$.
The coefficient of $1_H$ has no effect on $\Delta_H(w)$, but allowing it is convenient when constructing weights by Fourier inversion.

For a unitary representation $(\rho,V)$, symmetry of $w$ gives
\begin{equation}\label{eq:dirichlet-form}
\ip{\rho(\Delta_H(w))v}{v}
=
\frac12\sum_{h\in H}\mathsf c_h\norm{v-\rho(h)v}^2
\qquad (v\in V).
\end{equation}
In particular, $\rho(\Delta_H(w))$ is self-adjoint and positive semidefinite.

Let $V^H$ be the $H$-fixed subspace of $V$.  The spectral gap of $w$ in $V$, denoted by $\psi_H(w,V)$, is the smallest eigenvalue of $\rho(\Delta_H(w))$ on $(V^H)^\perp$.  If $V=V^H$, we set $\psi_H(w,V)=+\infty$.  We write $\psi_H(w)=\psi_H(w,\Reg_H)$ for the spectral gap of $w$.

Complete reducibility~\cite[Section~1.3]{Serre} gives
\begin{equation}\label{eq:gap-irrep-minimum}
\psi_H(w)=\min_{V\in\Irr(H)\setminus\{\triv\}}\psi_H(w,V).
\end{equation}
If $V$ is non-trivial and irreducible, then $V^H=0$, so $\psi_H(w,V)$ is the least eigenvalue of $V(\Delta_H(w))$.

\subsection{The wreath product and its quasi-regular representation}\label{subsec:wreath-quasiregular}

Let $G$ be a finite group with identity element $e$, and let $n\geqslant2$.  We write $W_n(G)=G^n\rtimes\Sn_n$ for the wreath product of $G$ by $\Sn_n$; see~\cite[Section~2.1]{CeccheriniScarabottiTolli}.  For $\pi\in\Sn_n$ and $g=(g_1,\ldots,g_n)\in G^n$, set $\pi\cdot g=(g_{\pi^{-1}(1)},\ldots,g_{\pi^{-1}(n)})$.  Multiplication is given by $(g;\pi)(h;\sigma)=\bigl(g(\pi\cdot h);\pi\sigma\bigr)$.
We use the canonical embeddings of $G^n$ and $\Sn_n$ in $W_n(G)$, and write $(ij)$ for the element corresponding to the transposition $(ij)\in\Sn_n$.

Let $\Std_n$ be the irreducible $\Sn_n$-representation of shape $(n-1,1)$.  We use the same notation for its lift through $W_n(G)\to\Sn_n$; thus $G^n$ acts trivially on $\Std_n$.

The quasi-regular representation of $W_n(G)$ on $W_n(G)/\Sn_n$ is $\mathcal P_n(G)=\Ind_{\Sn_n}^{W_n(G)}\triv\cong\C[W_n(G)/\Sn_n]$; see~\cite[Section~1.2.1]{CeccheriniScarabottiTolli}.  Every coset has a unique representative in $G^n$, so $(g;\pi)\Sn_n\mapsto\delta_g$ identifies $\mathcal P_n(G)$ with $\C[G^n]$.  Under this identification, $h\cdot\delta_g=\delta_{hg}$ for $h\in G^n$, while $\sigma\cdot\delta_g=\delta_{\sigma\cdot g}$ for $\sigma\in\Sn_n$.
The constant vectors form the unique fixed line.  We denote its orthogonal complement by $\mathcal P_n^0(G)$; in particular, $\dim\mathcal P_n(G)=\abs{G}^n$.

We shall use the following representation-theoretic form of Aldous' theorem.

\begin{theorem}[{\cite[Theorem~1.1]{CaputoLiggettRichthammer}}]\label{thm:aldous-form}
Let $u=\sum_{1\leqslant i<j\leqslant n}\mathsf a_{ij}(ij)\in\R[\Sn_n]$, where $\mathsf a_{ij}\geqslant0$, and let $U$ be a non-trivial irreducible representation of $\Sn_n$.  Then $\psi_{\Sn_n}(u,U)\geqslant\psi_{\Sn_n}(u,\Std_n)$.
\end{theorem}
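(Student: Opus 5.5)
The plan is to deduce this representation-theoretic form directly from the probabilistic statement in~\cite[Theorem~1.1]{CaputoLiggettRichthammer}, using the minimum over irreducibles in \eqref{eq:gap-irrep-minimum}.

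First I will identify the two processes with Laplacians. The weights $\mathsf a_{ij}$ define a weighted graph on $\{1,\ldots,n\}$. The generator of the interchange process acts on functions on $\Sn_n$. Up to composing with inversion, which does not change the spectrum, it is $\Reg_{\Sn_n}(\Delta_{\Sn_n}(u))$, because the process jumps from $\pi$ to $(ij)\pi$ at rate $\mathsf a_{ij}$. The one-particle random walk has generator $\rho(\Delta_{\Sn_n}(u))$ on $\C^n$, where $\rho$ is the permutation representation. Indeed, \eqref{eq:dirichlet-form} applied to $\delta_k$ recovers the usual Dirichlet form $\frac12\sum_{i<j}\mathsf a_{ij}\abs{f(i)-f(j)}^2$.

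Next I will split into two cases according to connectivity of the graph.

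Suppose the graph is connected. Then $0$ is a simple eigenvalue of both generators, with only the constants in its kernel. The fixed subspaces are the constants in $\Reg_{\Sn_n}$ and in $\C^n$. Hence the smallest non-zero eigenvalues in the Caputo--Liggett--Richthammer theorem are exactly $\psi_{\Sn_n}(u)$ and $\psi_{\Sn_n}(u,\C^n)$. Since $\C^n\cong\triv\oplus\Std_n$ and $\triv$ contributes only the excluded constants, $\psi_{\Sn_n}(u,\C^n)=\psi_{\Sn_n}(u,\Std_n)$. The cited theorem therefore says $\psi_{\Sn_n}(u)=\psi_{\Sn_n}(u,\Std_n)$. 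Then \eqref{eq:gap-irrep-minimum} gives $\psi_{\Sn_n}(u,U)\geqslant\psi_{\Sn_n}(u)=\psi_{\Sn_n}(u,\Std_n)$ for every non-trivial irreducible $U$.

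Suppose instead the graph is disconnected. Pick a connected component $C$ and take $v=\one_C-\frac{\abs C}{n}\one\in\C^n$. This vector is orthogonal to the constants, so it lies in $\Std_n$. By \eqref{eq:dirichlet-form}, $\ip{\rho(\Delta(u))v}{v}=0$, because $v$ is constant across every edge of positive weight. Hence $\psi_{\Sn_n}(u,\Std_n)=0$, and the inequality is trivial since all Laplacians are positive semidefinite.

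The only delicate point is matching conventions. First, the cited theorem is phrased for the spectral gap of the process, meaning the least non-zero eigenvalue. The paper's $\psi$ is instead the least eigenvalue on the complement of the group-fixed vectors. The case split above reconciles the two definitions. Second, one must check that left versus right multiplication, together with the normalisation of rates, changes nothing. Both generators are the same positive operator $\Delta(u)$ acting in unitarily equivalent representations, so the spectra agree.
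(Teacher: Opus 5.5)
Your argument is correct and matches the paper's use of this result: the paper gives no proof beyond citing Caputo--Liggett--Richthammer, and treats as routine exactly the translation you carry out. That translation identifies both generators with $\Delta_{\Sn_n}(u)$ in the regular and permutation representations, passes through $\C^n\cong\triv\oplus\Std_n$ and \eqref{eq:gap-irrep-minimum}, and handles the disconnected case, where $\psi_{\Sn_n}(u,\Std_n)=0$. One harmless slip: by \eqref{eq:dirichlet-form}, the quadratic form of $\rho(\Delta_{\Sn_n}(u))$ at $f=\sum_k f(k)\delta_k$ is $\sum_{i<j}\mathsf a_{ij}\abs{f(i)-f(j)}^2$, not half of it, but this does not affect the identification $\rho(\Delta_{\Sn_n}(u))=L_{\mathsf a}$.
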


\section{Reduction to spherical representations}\label{sec:reduction}

In this section we reduce the spectral-gap problem to the lifted standard representation and the spherical irreducible representations.  We use the standard terminology for spherical representations and finite homogeneous spaces from~\cite[Sections~1.2.1--1.2.2]{CeccheriniScarabottiTolli}.

An irreducible representation $V$ of $W_n(G)$ is $\Sn_n$-spherical when $V^{\Sn_n}\neq0$, and $\Sph_n(G)$ denotes the set of such irreducible representations.

\begin{lemma}[Frobenius reciprocity; {\cite[Section~1.2.1]{CeccheriniScarabottiTolli}}]\label{lem:frobenius-spherical}
Let $V$ be an irreducible representation of $W_n(G)$.  Then $V$ is $\Sn_n$-spherical if and only if it occurs in $\mathcal P_n(G)$, and its multiplicity is $\dim V^{\Sn_n}$.
\end{lemma}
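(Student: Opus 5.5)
The statement is Frobenius reciprocity for the pair $(W_n(G),\Sn_n)$. I will prove it by computing $\operatorname{Hom}_{W_n(G)}(V,\mathcal P_n(G))$ in two ways.

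First, since all representations are unitary and completely reducible, the multiplicity of the irreducible $V$ in $\mathcal P_n(G)$ equals $\dim\operatorname{Hom}_{W_n(G)}(V,\mathcal P_n(G))$ by Schur's lemma.

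Second, I identify $\mathcal P_n(G)=\C[W_n(G)/\Sn_n]$ with the space of functions $f\colon W_n(G)\to\C$ satisfying $f(x\sigma)=f(x)$ for $\sigma\in\Sn_n$, with $W_n(G)$ acting by left translation, $(y\cdot f)(x)=f(y^{-1}x)$. Define a map
\[
\operatorname{Hom}_{W_n(G)}(V,\mathcal P_n(G))\longrightarrow \operatorname{Hom}_{\Sn_n}(V,\triv)
\]
by $T\mapsto(v\mapsto (Tv)(1))$. Its inverse sends a functional $\phi$ to $T_\phi$, where $(T_\phi v)(x)=\phi(x^{-1}v)$. The checks are routine:
\begin{itemize}
\item $T_\phi v$ is right $\Sn_n$-invariant, because $\phi(\sigma^{-1}x^{-1}v)=\phi(x^{-1}v)$ by $\Sn_n$-invariance of $\phi$.
\item $T_\phi$ is $W_n(G)$-equivariant.
\item The two maps are mutually inverse, using $(Tv)(x)=(x^{-1}\cdot Tv)(1)=(T(x^{-1}v))(1)$.
\end{itemize}

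Third, $\operatorname{Hom}_{\Sn_n}(V,\triv)$ is the space of $\Sn_n$-invariant linear functionals on $V$. Via the invariant inner product, this space is conjugate-linearly isomorphic to $V^{\Sn_n}$: the functional $\langle\cdot,u\rangle$ is $\Sn_n$-invariant exactly when $u$ is $\Sn_n$-fixed. Hence the two spaces have the same dimension, and the multiplicity of $V$ in $\mathcal P_n(G)$ is $\dim V^{\Sn_n}$. In particular $V$ occurs in $\mathcal P_n(G)$ if and only if $V^{\Sn_n}\neq0$, that is, if and only if $V$ is $\Sn_n$-spherical.

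None of these steps is a real obstacle. The only point needing care is keeping the left/right conventions consistent in the coset-function model of the induced representation, so that right $\Sn_n$-invariance and left $W_n(G)$-equivariance match.
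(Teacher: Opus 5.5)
Your proposal is correct and is the standard proof of Frobenius reciprocity; the paper gives no argument of its own and simply cites this fact from Ceccherini-Silberstein--Scarabotti--Tolli, so your approach matches what it invokes. The coset-function model with evaluation at the identity, and the identification of $\Sn_n$-invariant functionals with $V^{\Sn_n}$ through the invariant inner product, are both handled correctly.
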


\begin{proposition}\label{prop:domination}
Let $w$ be as in \eqref{eq:intro-weight}, and let $V$ be a non-trivial irreducible representation of $W_n(G)$ with $V^{\Sn_n}=0$.  Then $\psi_{W_n(G)}(w,V)\geqslant\psi_{W_n(G)}(w,\Std_n)$.
\end{proposition}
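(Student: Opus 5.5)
The plan is to split the Laplacian into its transposition part and its base-group part. We discard the base-group part, which is positive semidefinite, and then apply Aldous' theorem (Theorem~\ref{thm:aldous-form}) to the restriction of $V$ to $\Sn_n$. Since $\Delta_{W_n(G)}$ is linear in the weight, we have
\[
V(\Delta_{W_n(G)}(w)) = V(\Delta_{W_n(G)}(w_T)) + V(\Delta_{W_n(G)}(w_N)).
\]
By \eqref{eq:dirichlet-form} both summands are positive semidefinite. Since $V$ is non-trivial and irreducible, $V^{W_n(G)}=0$, so $\psi_{W_n(G)}(w,V)$ is the least eigenvalue of the left-hand side. By the Courant--Fischer min--max principle, this eigenvalue is at least the least eigenvalue of $V(\Delta_{W_n(G)}(w_T))$.

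Next we handle the transposition part. The element $w_T$ lies in $\R[\Sn_n]\subseteq\R[W_n(G)]$, so $V(\Delta(w_T))$ depends only on the restriction $V|_{\Sn_n}$. By complete reducibility, $V|_{\Sn_n}\cong\bigoplus_k U_k$ with each $U_k$ irreducible. The hypothesis $V^{\Sn_n}=0$ says precisely that no $U_k$ is trivial. For each $k$, $U_k^{\Sn_n}=0$, so the least eigenvalue of $U_k(\Delta_{\Sn_n}(w_T))$ equals $\psi_{\Sn_n}(w_T,U_k)$. By Theorem~\ref{thm:aldous-form} this is at least $\psi_{\Sn_n}(w_T,\Std_n)$. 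Taking the minimum over $k$ gives
\[
\psi_{W_n(G)}(w,V)\geqslant\psi_{\Sn_n}(w_T,\Std_n).
\]

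It remains to identify the right-hand side with $\psi_{W_n(G)}(w,\Std_n)$, where $\Std_n$ is lifted to $W_n(G)$. First, the lift is irreducible and non-trivial, because it is irreducible and non-trivial already for the quotient $\Sn_n$. Hence its $W_n(G)$-fixed space is $0$, and $\psi_{W_n(G)}(w,\Std_n)$ is the least eigenvalue of $\Std_n(\Delta(w))$. Second, $G^n$ acts trivially on the lift, so every term $\mathsf c_g(1-g)$ acts as $0$. Therefore $\Std_n(\Delta(w))=\Std_n(\Delta(w_T))$, whose least eigenvalue is $\psi_{\Sn_n}(w_T,\Std_n)$. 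Combining this with the previous inequality proves the proposition.

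There is no serious obstacle here. The only points needing care are the convention that the gap of a non-trivial irreducible representation is its least eigenvalue, and the degenerate case where all $\mathsf a_{ij}=0$. In that case both sides are compared correctly: the right-hand side is $0$ and the inequality is trivial.
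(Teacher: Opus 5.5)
Your proposal is correct and follows the paper's proof essentially step for step. You drop the positive semidefinite base-group Laplacian, apply Theorem~\ref{thm:aldous-form} to each irreducible constituent of $V\!\downarrow_{\Sn_n}$ (each non-trivial because $V^{\Sn_n}=0$), and use the trivial action of $G^n$ on the lifted $\Std_n$ to identify $\psi_{\Sn_n}(w_T,\Std_n)$ with $\psi_{W_n(G)}(w,\Std_n)$.
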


\begin{proof}
Since $V^{\Sn_n}=0$, every irreducible constituent of $V\!\downarrow_{\Sn_n}$ is non-trivial.  Theorem~\ref{thm:aldous-form} therefore bounds the transposition Laplacian below by $\psi_{\Sn_n}(w_T,\Std_n)I$.  The base-group Laplacian is positive semidefinite by \eqref{eq:dirichlet-form}, and hence
\[
\psi_{W_n(G)}(w,V)
\geqslant
\lambda_{\min}\bigl(V(\Delta_{W_n(G)}(w_T))\bigr)
\geqslant
\psi_{\Sn_n}(w_T,\Std_n).
\]
The base group $G^n$ acts trivially on $\Std_n$, so $\Std_n(\Delta(w_N))=0$ and $\psi_{\Sn_n}(w_T,\Std_n)=\psi_{W_n(G)}(w,\Std_n)$.
\end{proof}

Combining \eqref{eq:gap-irrep-minimum}, Lemma~\ref{lem:frobenius-spherical}, and Proposition~\ref{prop:domination}, we obtain the spectral reduction.

\begin{theorem}\label{thm:universal-reduction}
Let $G$ be a finite group, let $n\geqslant2$, and let $w$ be as in \eqref{eq:intro-weight}.  Then
\[
\psi_{W_n(G)}(w)
=
\min\left\{
\psi_{W_n(G)}(w,\Std_n),
\min_{\substack{V\in\Sph_n(G)\\V\neq\triv}}
\psi_{W_n(G)}(w,V)
\right\}.
\]
\end{theorem}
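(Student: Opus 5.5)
The plan is to obtain the identity by splitting the minimum in \eqref{eq:gap-irrep-minimum} into two classes of irreducible representations and bounding each class separately. Applied to $H=W_n(G)$, that formula writes $\psi_{W_n(G)}(w)$ as the minimum of $\psi_{W_n(G)}(w,V)$ over non-trivial $V\in\Irr(W_n(G))$. Every such $V$ satisfies exactly one of $V^{\Sn_n}\neq0$ or $V^{\Sn_n}=0$. In the first case $V$ is a non-trivial member of $\Sph_n(G)$, so it already appears in the inner minimum of the statement.

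For the representations with $V^{\Sn_n}=0$, I would apply Proposition~\ref{prop:domination}. It gives $\psi_{W_n(G)}(w,V)\geqslant\psi_{W_n(G)}(w,\Std_n)$, so none of these terms can push the minimum below $\psi_{W_n(G)}(w,\Std_n)$. Consequently the minimum over all non-trivial irreducibles is at least the right-hand side of the statement.

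For the reverse inequality, I need $\Std_n$ itself to be a legitimate term of the minimum in \eqref{eq:gap-irrep-minimum}. The lift of $\Std_n$ is irreducible and non-trivial as a $W_n(G)$-representation, since the quotient map $W_n(G)\to\Sn_n$ is surjective. Hence its gap is one of the terms being minimised, and the left-hand side is at most $\psi_{W_n(G)}(w,\Std_n)$. Combined with the inclusion of the spherical terms, the left-hand side is at most the right-hand side, and equality follows. Whether $\Std_n$ is spherical does not affect the argument: it has no non-zero $\Sn_n$-fixed vectors, so in fact it lies in the non-spherical class, but listing it separately is harmless either way.

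There is no real obstacle here; the only point that needs checking is that $\Std_n$ remains irreducible after lifting, which the surjectivity of $W_n(G)\to\Sn_n$ guarantees. Lemma~\ref{lem:frobenius-spherical} is not needed for this identity. It is the bridge to Theorem~\ref{thm:intro-reduction}: the spherical irreducibles are exactly the non-trivial constituents of $\mathcal P_n^0(G)$, so the inner minimum equals $\psi_{W_n(G)}(w,\mathcal P_n^0(G))$.
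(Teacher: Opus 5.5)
Your proof is correct and matches the paper's argument: you split the minimum in \eqref{eq:gap-irrep-minimum} into spherical and non-spherical irreducibles, bound the non-spherical ones below by $\Std_n$ via Proposition~\ref{prop:domination}, and use that the lifted $\Std_n$ is itself a non-trivial irreducible term of that minimum. You are also right that Lemma~\ref{lem:frobenius-spherical} is needed only to pass to the equivalent $\mathcal P_n^0(G)$ formulation, not for the identity as stated.
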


Since the non-trivial spherical representations are the irreducible summands of $\mathcal P_n^0(G)$, Theorem~\ref{thm:universal-reduction} may be written equivalently as
\[
\psi_{W_n(G)}(w)
=
\psi_{W_n(G)}\bigl(w,\Std_n\oplus \mathcal P_n^0(G)\bigr).
\]

Since $\dim \mathcal P_n^0(G)=\abs{G}^n-1$ and $\dim\Std_n=n-1$, we also obtain the following dimension bound.

\begin{corollary}
Let $G$ be a finite group and let $n\geqslant2$.  Then the spectral gap of every $w$ in \eqref{eq:intro-weight} may be computed in a representation of dimension $\abs{G}^n+n-2$.
\end{corollary}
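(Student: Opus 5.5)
The corollary follows once Theorem~\ref{thm:universal-reduction} is available in its equivalent form
\[
\psi_{W_n(G)}(w)=\psi_{W_n(G)}\bigl(w,\Std_n\oplus\mathcal P_n^0(G)\bigr).
\]
The plan is to exhibit the representation $\Std_n\oplus\mathcal P_n^0(G)$ explicitly, compute its dimension, and justify that equivalent form carefully.

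\emph{Justifying the equivalent form.} By Lemma~\ref{lem:frobenius-spherical}, the irreducible constituents of $\mathcal P_n(G)$ are exactly the spherical ones. The trivial representation occurs in $\mathcal P_n(G)$ with multiplicity $\dim\triv^{\Sn_n}=1$, realised by the constant line. Hence $\mathcal P_n^0(G)$ is the direct sum of the non-trivial spherical irreducibles, each with positive multiplicity.

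Next, the spectral gap of a direct sum of non-trivial irreducibles is the minimum of their individual gaps. Indeed, $\rho(\Delta(w))$ is block diagonal, so its spectrum is the union of the blocks' spectra. Moreover, the fixed subspace of $\Std_n\oplus\mathcal P_n^0(G)$ is zero, so its orthogonal complement is the whole space. Repeated summands do not change the minimum. Therefore the right-hand side equals
\[
\min\Bigl\{\psi(w,\Std_n),\ \min_{V\in\Sph_n(G)\setminus\{\triv\}}\psi(w,V)\Bigr\},
\]
which is $\psi_{W_n(G)}(w)$ by Theorem~\ref{thm:universal-reduction}.

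\emph{Dimension count.} In Section~\ref{subsec:wreath-quasiregular}, $\mathcal P_n(G)$ was identified with $\C[G^n]$. This gives $\dim\mathcal P_n(G)=\abs{G}^n$, and therefore $\dim\mathcal P_n^0(G)=\abs{G}^n-1$. The shape $(n-1,1)$ has dimension $n-1$, by the hook length formula or via the permutation module $\C^n=\triv\oplus\Std_n$. Summing gives $\abs{G}^n+n-2$.

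There is no genuine obstacle here; the statement is a bookkeeping consequence of results already proved. The only point needing care is the edge case where $\Std_n$ might coincide with a summand of $\mathcal P_n^0(G)$. This happens when $G$ is trivial, where $\mathcal P_n$ is one-dimensional and $\mathcal P_n^0=0$. In that case $\Std_n$ is not spherical, since $\Std_n^{\Sn_n}=0$, so a coincidence never occurs. Even if it did, duplicated summands would not alter the minimum, so the claimed dimension bound holds regardless.
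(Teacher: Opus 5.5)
Your proposal is correct and follows the paper's argument. The paper likewise rewrites Theorem~\ref{thm:universal-reduction} as $\psi_{W_n(G)}(w)=\psi_{W_n(G)}\bigl(w,\Std_n\oplus\mathcal P_n^0(G)\bigr)$ and adds $\dim\mathcal P_n^0(G)=\abs{G}^n-1$ to $\dim\Std_n=n-1$. Your closing edge-case remark is worded confusingly, since you say ``this happens when $G$ is trivial'' and then ``a coincidence never occurs.'' Its actual point is right, though: $\Std_n^{\Sn_n}=0$, so $\Std_n$ is never a summand of $\mathcal P_n(G)$, and a duplicate would not change the minimum anyway.
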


The reduction also has a direct Markov-chain interpretation.  Let $L_{\mathsf a}$ be the weighted graph Laplacian on $\C^n$ defined by $(L_{\mathsf a}f)(i)=\sum_{j\neq i}\mathsf a_{ij}(f(i)-f(j))$, and write $\lambda_{\mathrm{rw}}(\mathsf a)=\lambda_{\min}(L_{\mathsf a}\vert_{\one^\perp})$.
If the weighted graph with edge weights $\mathsf a_{ij}$ is connected, then $\lambda_{\mathrm{rw}}(\mathsf a)$ is its usual spectral gap.

On $\C[G^n]$, regarded as the space of functions on $G^n$, define
\begin{equation}\label{eq:colour-configuration-laplacian}
(\mathcal L_{\mathrm{col}}F)(x)
=
\sum_{1\leqslant i<j\leqslant n}\mathsf a_{ij}
\bigl(F(x)-F((ij)\cdot x)\bigr)+
\sum_{g\in G^n}\mathsf c_g
\bigl(F(x)-F(g^{-1}x)\bigr).
\end{equation}
Let $\psi(\mathcal L_{\mathrm{col}})$ be its least eigenvalue on the orthogonal complement of the constant functions.

Combining the preceding identification with Theorem~\ref{thm:universal-reduction}, we obtain the following.

\begin{corollary}
Let $w$ be as in \eqref{eq:intro-weight}.  Then
\[
\psi_{W_n(G)}(w)
=
\min\set{\lambda_{\mathrm{rw}}(\mathsf a),\,\psi(\mathcal L_{\mathrm{col}})}.
\]
\end{corollary}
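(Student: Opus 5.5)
The plan is to identify the two terms in Theorem~\ref{thm:universal-reduction} with $\lambda_{\mathrm{rw}}(\mathsf a)$ and $\psi(\mathcal L_{\mathrm{col}})$, respectively. Everything then follows from the identification of $\mathcal P_n(G)$ with $\C[G^n]$ in Section~\ref{subsec:wreath-quasiregular}.

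For the first term, $G^n$ acts trivially on $\Std_n$, so $\Std_n(\Delta(w_N))=0$ and $\psi_{W_n(G)}(w,\Std_n)=\psi_{\Sn_n}(w_T,\Std_n)$. Realise $\Std_n$ as $\one^\perp\subset\C^n$ with the permutation action $\pi e_i=e_{\pi(i)}$. Then
\[
\sum_{i<j}\mathsf a_{ij}(1-(ij))f
\]
has $i$-th coordinate $\sum_{j\neq i}\mathsf a_{ij}(f(i)-f(j))$, which is $(L_{\mathsf a}f)(i)$. Hence $\psi(w,\Std_n)=\lambda_{\min}(L_{\mathsf a}\vert_{\one^\perp})=\lambda_{\mathrm{rw}}(\mathsf a)$. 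This holds even when the graph is disconnected, since $\Std_n$ is non-trivial irreducible and has no fixed vectors.

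For the second term, the reformulation after Theorem~\ref{thm:universal-reduction} shows that the inner minimum equals $\psi_{W_n(G)}(w,\mathcal P_n^0(G))$. This is the least eigenvalue of $\Delta(w)$ acting on $\mathcal P_n(G)$ restricted to the complement of the constants. The constants are exactly the fixed line. If $\mathcal P_n^0(G)$ had extra $W_n(G)$-fixed vectors there would be nothing to compare, but the fixed line is unique, so the restriction is correct. When $\mathcal P_n^0(G)=0$ the minimum is empty, but this cannot occur since $\abs{G}^n\geqslant 2$ unless $G$ is trivial. If $G$ is trivial, both sides are $+\infty$ by convention, and this needs a one-line remark.

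It remains to transport $\Delta(w)$ to functions. Identify $\delta_g$ with the indicator of $g$, so a vector $F=\sum_x F(x)\delta_x$ is a function on $G^n$. For $h\in G^n$ we have $h\cdot F=\sum_x F(x)\delta_{hx}$, whose value at $x$ is $F(h^{-1}x)$. Similarly $(ij)\cdot F$ has value $F((ij)^{-1}\cdot x)=F((ij)\cdot x)$ at $x$. Summing with the weights gives exactly \eqref{eq:colour-configuration-laplacian}. The identification is unitary, because the cosets form an orthonormal basis. Therefore the least eigenvalue on the complement of the constants is $\psi(\mathcal L_{\mathrm{col}})$.

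Substituting both identifications into Theorem~\ref{thm:universal-reduction} gives the corollary. The only delicate point is the sign and inverse convention in the $G^n$-action. Since $\mathsf c_g=\mathsf c_{g^{-1}}$, writing $F(gx)$ instead of $F(g^{-1}x)$ gives the same operator, so either convention works.
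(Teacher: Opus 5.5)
Your proposal is correct and follows the paper's route. The paper obtains this corollary by combining Theorem~\ref{thm:universal-reduction}, in the form $\psi_{W_n(G)}(w)=\psi_{W_n(G)}(w,\Std_n\oplus\mathcal P_n^0(G))$, with the identification $\mathcal P_n(G)\cong\C[G^n]$; you have written out both resulting identifications ($\Std_n(\Delta(w))$ as $L_{\mathsf a}$ on $\one^\perp$, and $\Delta(w)$ on $\C[G^n]$ as $\mathcal L_{\mathrm{col}}$) explicitly and correctly. In the trivial-group remark, only the second term is $+\infty$, so the statement reduces to $\psi_{\Sn_n}(w_T)=\lambda_{\mathrm{rw}}(\mathsf a)$, which follows from Theorem~\ref{thm:aldous-form} together with your first identification.
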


\section{Decomposition of the quasi-regular representation}\label{sec:constituents}

We now decompose $\mathcal P_n(G)$.  We use the standard parametrisation of wreath-product representations by multipartitions; see~\cite[Section~2.4.1]{CeccheriniScarabottiTolli} or~\cite[Chapter~4]{JamesKerber}.

Recall that $\Gamma=\Irr(G)$.  Choose a representative $U_\gamma$ of each $\gamma\in\Gamma$, and write $d_\gamma=\dim U_\gamma$.  A $G$-multipartition of $n$ is a family $\Lambda=(\lambda^\gamma)_{\gamma\in\Gamma}$ of partitions satisfying $\sum_\gamma\abs{\lambda^\gamma}=n$.  The corresponding irreducible representation of $G\wr\Sn_n$ is
\begin{equation}\label{eq:wreath-irrep-construction}
V_\Lambda
=
\Ind_{\prod_{\gamma\in\Gamma}(G\wr\Sn_{\abs{\lambda^\gamma}})}^{G\wr\Sn_n}
\left(
\mathop{\boxtimes}_{\gamma\in\Gamma}
\bigl(U_\gamma^{\otimes\abs{\lambda^\gamma}}\otimes S^{\lambda^\gamma}\bigr)
\right),
\end{equation}
where the base group acts trivially on each Specht module $S^{\lambda^\gamma}$.  These representations form a complete set of pairwise non-isomorphic irreducible representations of $G\wr\Sn_n$.

For a partition $\lambda$, we write $\Schur_\lambda(\C^d)$ for the irreducible polynomial $\mathrm{GL}_d(\C)$-module of highest weight $\lambda$, and set it equal to zero when $\len(\lambda)>d$; see~\cite[Section~6.1]{FultonHarris}.

\begin{theorem}\label{thm:coset-decomposition}
Let $\Lambda=(\lambda^\gamma)_{\gamma\in\Gamma}$ be a $G$-multipartition of $n$.  Then the multiplicity of $V_\Lambda$ in $\mathcal P_n(G)$ is $\prod_{\gamma\in\Gamma}\dim\Schur_{\lambda^\gamma}(\C^{d_\gamma})$.  Consequently,
\[
\mathcal P_n(G)
\cong
\bigoplus_{\substack{\Lambda=(\lambda^\gamma)_{\gamma\in\Gamma}\
\sum_\gamma\abs{\lambda^\gamma}=n\,,\ 
\len(\lambda^\gamma)\leqslant d_\gamma\text{ for every }\gamma}}
\left(
\prod_{\gamma\in\Gamma}\dim\Schur_{\lambda^\gamma}(\C^{d_\gamma})
\right)V_\Lambda .
\]
\end{theorem}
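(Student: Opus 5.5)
By Lemma~\ref{lem:frobenius-spherical}, the multiplicity of $V_\Lambda$ in $\mathcal P_n(G)$ equals $\dim V_\Lambda^{\Sn_n}$. So the plan is to compute this fixed-space dimension from the induced model \eqref{eq:wreath-irrep-construction}. The vanishing clause and the displayed decomposition then follow at once, because $\dim\Schur_\lambda(\C^d)>0$ exactly when $\len(\lambda)\leqslant d$.

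\textbf{Step 1 (Mackey).} Write $n_\gamma=\abs{\lambda^\gamma}$, $H=\prod_\gamma(G\wr\Sn_{n_\gamma})$ and $W=\boxtimes_\gamma(U_\gamma^{\otimes n_\gamma}\otimes S^{\lambda^\gamma})$.
\begin{itemize}
\item Frobenius reciprocity gives $\dim V_\Lambda^{\Sn_n}=\dim\mathrm{Hom}_{W_n(G)}(\mathcal P_n(G),V_\Lambda)$.
\item Applying Mackey's formula to $\Ind_H^{W_n(G)}W$ restricted to $\Sn_n$ expresses this as
\[
\dim V_\Lambda^{\Sn_n}=\sum_{x\in\Sn_n\backslash W_n(G)/H}\dim W^{\,x^{-1}\Sn_n x\cap H}.
\]
\item Double cosets $\Sn_n\backslash W_n(G)/H$ correspond to $H$-orbits on $W_n(G)/\Sn_n\cong G^n$. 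Since $H$ contains $G^n$, which acts transitively on $G^n$, there is a single double coset, represented by $x=e$.
\end{itemize}
Hence $\dim V_\Lambda^{\Sn_n}=\dim W^{\Sn_n\cap H}$ with $\Sn_n\cap H=\prod_\gamma\Sn_{n_\gamma}$. The invariants of an external tensor product split as a tensor product, so
\[
\dim V_\Lambda^{\Sn_n}=\prod_\gamma\dim\bigl(U_\gamma^{\otimes n_\gamma}\otimes S^{\lambda^\gamma}\bigr)^{\Sn_{n_\gamma}}.
\]
Here $\Sn_{n_\gamma}$ acts diagonally: by place permutation on $U_\gamma^{\otimes n_\gamma}$ and through the Specht module on $S^{\lambda^\gamma}$. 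This is how the inflated representation $U^{\otimes m}\otimes S^\lambda$ of $G\wr\Sn_m$ restricts to the top group.

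\textbf{Step 2 (Schur--Weyl).} Schur--Weyl duality gives the $\mathrm{GL}_d\times\Sn_m$-decomposition
\[
(\C^d)^{\otimes m}\cong\bigoplus_{\mu\vdash m,\ \len(\mu)\leqslant d}\Schur_\mu(\C^d)\otimes S^\mu.
\]
Specht modules are real, hence self-dual, so $\dim(S^\mu\otimes S^\lambda)^{\Sn_m}=\delta_{\mu\lambda}$. Therefore
\[
\dim\bigl(U^{\otimes m}\otimes S^\lambda\bigr)^{\Sn_m}=\dim\Schur_\lambda(\C^{d}),
\]
which is zero if $\len(\lambda)>d$. Taking the product over $\gamma$ gives the multiplicity formula. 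The decomposition of $\mathcal P_n(G)$ then follows because the $V_\Lambda$ form a complete set of irreducible representations.

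\textbf{Main obstacle.} I expect the delicate point to be the single-double-coset claim in Step 1. It must be checked against the paper's conventions, namely that $\Sn_n$ is embedded as $\{(e;\pi)\}$ and $H$ contains the full base group. I will verify it by the coset identification $(g;\pi)\Sn_n\mapsto\delta_g$ already recorded in Section~\ref{subsec:wreath-quasiregular}, under which $G^n\subseteq H$ acts by left translation, and left translation on $G^n$ is transitive.
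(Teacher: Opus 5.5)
Your proof is correct, but it runs in the opposite direction from the paper's. The paper decomposes $\mathcal P_n(G)\cong\C[G]^{\otimes n}$ directly. It uses the bimodule decomposition $\C[G]\cong\bigoplus_\gamma U_\gamma\otimes U_\gamma^*$ and groups the tensor factors by type to obtain modules induced from $\prod_\gamma(G\wr\Sn_{m_\gamma})$. It then applies Schur--Weyl duality to the \emph{dual} factors $(U_\gamma^*)^{\otimes m_\gamma}$. In this way each $V_\Lambda$ appears with an explicit multiplicity space $\bigotimes_\gamma\Schur_{\lambda^\gamma}(U_\gamma^*)$, and the fixed-space formula of Corollary~\ref{cor:spherical-classification} is deduced afterwards from the multiplicity via Lemma~\ref{lem:frobenius-spherical}.

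You instead compute $\dim V_\Lambda^{\Sn_n}$ first, working on the irreducible side:
Mackey's formula with a single double coset, which is immediate from $W_n(G)=\Sn_nG^n$ and $G^n\subseteq H$, so your ``main obstacle'' is harmless;
factorisation of Young-subgroup invariants over the external tensor product;
Schur--Weyl duality on $U_\gamma^{\otimes n_\gamma}$;
and self-duality of Specht modules.
You then convert the result to a multiplicity by Frobenius reciprocity. There is no circularity, because Lemma~\ref{lem:frobenius-spherical} is standard and independent of the theorem.

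Your route avoids the bimodule decomposition and the somewhat informal ``grouping by isotypic type'' step, and it proves Corollary~\ref{cor:spherical-classification} directly. The paper's route gives a concrete model of the isotypic components of $\mathcal P_n(G)$, not just their multiplicities.
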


\begin{proof}
The regular bimodule decomposition is $\C[G]\cong\bigoplus_{\gamma\in\Gamma}U_\gamma\otimes U_\gamma^*$; see~\cite[Section~6.2]{Serre}.  By Section~\ref{subsec:wreath-quasiregular}, $\mathcal P_n(G)$ is $\C[G]^{\otimes n}$, with $G^n$ acting on the tensor factors and $\Sn_n$ permuting them.  Grouping the tensor factors by isotypic type gives
\begin{equation}\label{eq:type-decomposition}
\mathcal P_n(G)
\cong
\bigoplus_{\substack{(m_\gamma)\in\N_0^\Gamma\,;\ 
\sum_\gamma m_\gamma=n}}
\Ind_{\prod_\gamma(G\wr\Sn_{m_\gamma})}^{G\wr\Sn_n}
\left(
\mathop{\boxtimes}_{\gamma\in\Gamma}
\bigl(U_\gamma^{\otimes m_\gamma}\otimes(U_\gamma^*)^{\otimes m_\gamma}\bigr)
\right).
\end{equation}
Schur--Weyl duality~\cite[Section~6.1]{FultonHarris} gives
\begin{equation}\label{eq:schur-weyl}
(U_\gamma^*)^{\otimes m_\gamma}
\cong
\bigoplus_{\substack{\lambda\vdash m_\gamma\,;\ 
\len(\lambda)\leqslant d_\gamma}}
S^\lambda\otimes\Schur_\lambda(U_\gamma^*).
\end{equation}
Substituting \eqref{eq:schur-weyl} into \eqref{eq:type-decomposition} and comparing with \eqref{eq:wreath-irrep-construction} proves the result.
\end{proof}

Combining Theorem~\ref{thm:coset-decomposition} with Lemma~\ref{lem:frobenius-spherical}, we obtain the spherical classification.

\begin{corollary}\label{cor:spherical-classification}
Let $\Lambda=(\lambda^\gamma)_{\gamma\in\Gamma}$ be a $G$-multipartition of $n$.  Then $\dim V_\Lambda^{\Sn_n}=\prod_{\gamma\in\Gamma}\dim\Schur_{\lambda^\gamma}(\C^{d_\gamma})$.  Moreover, $V_\Lambda$ is $\Sn_n$-spherical if and only if $\len(\lambda^\gamma)\leqslant d_\gamma$ for every $\gamma\in\Gamma$.
\end{corollary}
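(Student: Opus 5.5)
The plan is to derive Corollary~\ref{cor:spherical-classification} by chaining Theorem~\ref{thm:coset-decomposition} with Lemma~\ref{lem:frobenius-spherical}. There is no new computation; the only work is matching the two multiplicity statements.

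First, Lemma~\ref{lem:frobenius-spherical} says that the multiplicity of an irreducible $V_\Lambda$ in $\mathcal P_n(G)=\Ind_{\Sn_n}^{W_n(G)}\triv$ equals $\dim V_\Lambda^{\Sn_n}$. This is Frobenius reciprocity, $\operatorname{Hom}_{W_n(G)}(V_\Lambda,\Ind\triv)\cong\operatorname{Hom}_{\Sn_n}(V_\Lambda\!\downarrow,\triv)$. The right-hand side has dimension $\dim V_\Lambda^{\Sn_n}$, because $V_\Lambda$ is unitary and so invariants and coinvariants agree. Second, Theorem~\ref{thm:coset-decomposition} gives this same multiplicity as $\prod_{\gamma}\dim\Schur_{\lambda^\gamma}(\C^{d_\gamma})$. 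Since multiplicities of irreducibles in a completely reducible representation are well defined, the two expressions agree, which proves the dimension formula.

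For the sphericity criterion, $V_\Lambda$ is $\Sn_n$-spherical exactly when $\dim V_\Lambda^{\Sn_n}>0$, i.e.\ when the product is nonzero. Each factor $\dim\Schur_{\lambda^\gamma}(\C^{d_\gamma})$ is zero precisely when $\len(\lambda^\gamma)>d_\gamma$, by the stated convention. Conversely, it is positive when $\len(\lambda^\gamma)\leqslant d_\gamma$, since the Weyl module is then nonzero; equivalently, the number of semistandard tableaux of shape $\lambda^\gamma$ with entries in $\{1,\dots,d_\gamma\}$ is positive. Empty partitions give the trivial module, of dimension $1$, so components with $\lambda^\gamma=\varnothing$ contribute harmlessly. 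Hence the product is nonzero if and only if $\len(\lambda^\gamma)\leqslant d_\gamma$ for every $\gamma$.

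I expect no real obstacle. The only point needing care is the positivity of $\dim\Schur_\lambda(\C^d)$ when $\len(\lambda)\leqslant d$. I would cite this from \cite[Section~6.1]{FultonHarris} rather than reprove it, noting for instance that the semistandard tableau with row $i$ filled by $i$ exists under that hypothesis.
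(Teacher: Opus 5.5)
Your proposal is correct and matches the paper's proof, which obtains the corollary by combining Theorem~\ref{thm:coset-decomposition} with Lemma~\ref{lem:frobenius-spherical}. Your remark that $\dim\Schur_\lambda(\C^d)>0$ whenever $\len(\lambda)\leqslant d$, witnessed by the tableau whose $i$th row is filled with $i$, makes explicit a step the paper leaves implicit.
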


Recall that a pair $(H,K)$ of finite groups with $K\leqslant H$ is a \textbf{Gelfand pair} if $\Ind_K^H\triv$ is multiplicity-free; see~\cite[Section~1.2.4]{CeccheriniScarabottiTolli}.

\begin{corollary}
The quasi-regular representation $\mathcal P_n(G)$ is multiplicity-free if and only if $G$ is abelian.  Equivalently, $(G\wr\Sn_n,\Sn_n)$ is a Gelfand pair if and only if $G$ is abelian.
\end{corollary}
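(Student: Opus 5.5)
The corollary follows from Theorem~\ref{thm:coset-decomposition}. By that theorem, $\mathcal P_n(G)$ is multiplicity-free if and only if every multipartition $\Lambda$ with $\len(\lambda^\gamma)\leqslant d_\gamma$ for all $\gamma$ satisfies
\[
\prod_{\gamma\in\Gamma}\dim\Schur_{\lambda^\gamma}(\C^{d_\gamma})\leqslant 1 .
\]
Each factor is a positive integer whenever $\len(\lambda^\gamma)\leqslant d_\gamma$; the empty partition gives the trivial module, of dimension $1$. So the question reduces to when $\dim\Schur_\lambda(\C^d)=1$. The equivalence with the Gelfand pair condition is then immediate from the definition just recalled, taking $H=G\wr\Sn_n$ and $K=\Sn_n$.

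First suppose $G$ is abelian. Then every $d_\gamma=1$, so every admissible $\lambda^\gamma$ is a one-row partition $(m)$ or empty. Since $\Schur_{(m)}(\C^1)$ is the one-dimensional module $\det^m$, each admissible multiplicity equals $1$. Hence $\mathcal P_n(G)$ is multiplicity-free.

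Conversely, suppose $G$ is non-abelian. Then some $\gamma_0\in\Gamma$ has $d:=d_{\gamma_0}\geqslant2$. Take the multipartition with $\lambda^{\gamma_0}=(n)$ and all other components empty. This is admissible, since $\len((n))=1\leqslant d$. Its multiplicity is
\[
\dim\Schur_{(n)}(\C^d)=\dim\Symm^n(\C^d)=\binom{n+d-1}{n}\geqslant n+1\geqslant 3,
\]
using $n\geqslant2$ and $d\geqslant2$. So $V_\Lambda$ occurs with multiplicity greater than $1$, and $\mathcal P_n(G)$ is not multiplicity-free.

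There is essentially no obstacle here. The only points to check are that a group is abelian exactly when all its irreducible representations have degree $1$, and that $\dim\Schur_{(n)}(\C^d)=\binom{n+d-1}{n}$. The first is standard. The second follows from $\Schur_{(n)}=\Symm^n$, or from the hook-content formula. Both facts are classical and can be cited from~\cite{Serre} and~\cite{FultonHarris}.
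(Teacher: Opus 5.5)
Your proof is correct and follows essentially the same route as the paper: both read the multiplicities off Theorem~\ref{thm:coset-decomposition}. For abelian $G$, every non-zero one-row Schur module on $\C^1$ is one-dimensional; for non-abelian $G$, taking $\lambda^\gamma=(n)$ for some $\gamma$ with $d_\gamma\geqslant2$ gives multiplicity $\dim\Symm^n(\C^{d_\gamma})>1$.
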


\begin{proof}
If $G$ is abelian, then $d_\gamma=1$ for every $\gamma\in\Gamma$, and every non-zero Schur module in Theorem~\ref{thm:coset-decomposition} is one-dimensional.  Conversely, suppose that $G$ is non-abelian.  Then some $\gamma\in\Gamma$ has $d_\gamma\geqslant2$.  Taking $\lambda^\gamma=(n)$ and all other components empty gives multiplicity $\dim\Symm^n(\C^{d_\gamma})>1$, so $\mathcal P_n(G)$ is not multiplicity-free.
\end{proof}

\subsection{Finite abelian base groups}

Let $A$ be a finite abelian group of order $r$, and let $\widehat A$ be its character group.  A multiplicity vector is a family $\kappa=(\kappa_\chi)_{\chi\in\widehat A}$ of non-negative integers satisfying $\sum_{\chi\in\widehat A}\kappa_\chi=n$.  For such $\kappa$, write $V_\kappa$ for the irreducible wreath-product representation whose $\chi$-component is the one-row partition $(\kappa_\chi)$.

\begin{corollary}\label{cor:cyclic-row-multipartitions}
Let $\kappa$ be a multiplicity vector.  Then $V_\kappa$ is $\Sn_n$-spherical, occurs with multiplicity one in $\mathcal P_n(A)$, and has dimension $n!/\prod_{\chi\in\widehat A}\kappa_\chi!$.  Every spherical irreducible representation arises in this way.  The trivial representation corresponds to $\kappa_\triv=n$ and $\kappa_\chi=0$ for $\chi\neq\triv$.
\end{corollary}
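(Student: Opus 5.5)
This corollary is a specialisation of Theorem~\ref{thm:coset-decomposition} and Corollary~\ref{cor:spherical-classification} to $G=A$. Since $A$ is abelian, every $\gamma\in\Gamma=\widehat A$ has $d_\gamma=1$. The condition $\len(\lambda^\chi)\leqslant1$ therefore forces each component to be empty or a single row $(\kappa_\chi)$. Thus the $A$-multipartitions satisfying the sphericity condition of Corollary~\ref{cor:spherical-classification} are exactly the one-row multipartitions. These are indexed bijectively by multiplicity vectors $\kappa$ with $\sum_\chi\kappa_\chi=n$, which gives both that each $V_\kappa$ is spherical and that every spherical irreducible is of this form. For the multiplicity, Theorem~\ref{thm:coset-decomposition} gives $\prod_\chi\dim\Schur_{(\kappa_\chi)}(\C^1)$. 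Each factor is $\dim\Symm^{\kappa_\chi}(\C)=1$, with the convention that the empty partition contributes $1$, so the multiplicity is one.

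For the dimension, I would read it off \eqref{eq:wreath-irrep-construction}. With $d_\chi=1$ and $S^{(\kappa_\chi)}$ the trivial one-dimensional Specht module, the inducing representation $\boxtimes_\chi(U_\chi^{\otimes\kappa_\chi}\otimes S^{(\kappa_\chi)})$ is one-dimensional. The index of $\prod_\chi(A\wr\Sn_{\kappa_\chi})$ in $A\wr\Sn_n$ is $\abs{A}^n n!/\prod_\chi\abs{A}^{\kappa_\chi}\kappa_\chi!=n!/\prod_\chi\kappa_\chi!$, since $\sum_\chi\kappa_\chi=n$. This index is the dimension of the induced representation. As a sanity check, summing $n!/\prod\kappa_\chi!$ over all $\kappa$ gives $r^n$ by the multinomial theorem, which matches $\dim\mathcal P_n(A)=\abs{A}^n$ and confirms multiplicity-freeness.

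For the trivial representation, take $\kappa_\triv=n$ and all other $\kappa_\chi=0$. Then \eqref{eq:wreath-irrep-construction} involves no induction, and we get $U_\triv^{\otimes n}\otimes S^{(n)}$. Here $A^n$ acts through the trivial character in every coordinate, and $\Sn_n$ acts by the trivial Specht module, so this is the trivial representation of $W_n(A)$.

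No step is a real obstacle. The only points needing care are the convention that empty components contribute factor $1$, and the correct index computation.
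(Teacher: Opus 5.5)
Your proposal is correct and follows essentially the same route as the paper: you specialise Theorem~\ref{thm:coset-decomposition} and Corollary~\ref{cor:spherical-classification} to $d_\chi=1$, so that $\Schur_\lambda(\C)$ is non-zero, and then one-dimensional, exactly for one-row $\lambda$, and you read the dimension off \eqref{eq:wreath-irrep-construction} as the index $n!/\prod_\chi\kappa_\chi!$. The paper also notes that the dimension can be read from the multislice realisation $\abs{\Omega_\kappa}$ in Proposition~\ref{prop:cyclic-orbit-realisation}; your explicit index computation and your check of the trivial representation simply spell out the first of the two routes it mentions.
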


\begin{proof}
A Schur module $\Schur_\lambda(\C)$ is non-zero if and only if $\lambda$ has at most one row, in which case it is one-dimensional.  The dimension formula follows either from \eqref{eq:wreath-irrep-construction} or from the orbit realisation below.
\end{proof}

Following~\cite[Section~1]{FilmusODonnellWu} and~\cite[Section~1.1]{Salez}, the multislice of shape $\kappa$ is
\[
\Omega_\kappa
=
\set{z=(\chi_1,\ldots,\chi_n)\in\widehat A^n:
\abs{\set{i:\chi_i=\chi}}=\kappa_\chi\text{ for every }\chi\in\widehat A}.
\]
The group $\Sn_n$ acts transitively on $\Omega_\kappa$ by permuting coordinates.  For $z=(\chi_1,\ldots,\chi_n)\in\widehat A^n$ and $x=(x_1,\ldots,x_n)\in A^n$, write $\ip{z}{x}=\prod_{i=1}^n\chi_i(x_i)$. 

\begin{proposition}\label{prop:cyclic-orbit-realisation}
Let $\kappa$ be a multiplicity vector.  Then $V_\kappa$ has a basis $\{\mathbf e_z\}_{z\in\Omega_\kappa}$ satisfying
\begin{equation}\label{eq:cyclic-orbit-action}
x\cdot \mathbf e_z
=
\ip{z}{x}\mathbf e_z,
\qquad
\pi\cdot \mathbf e_z=\mathbf e_{\pi\cdot z},
\end{equation}
for $z=(\chi_1,\ldots,\chi_n)\in\Omega_\kappa$, $x=(x_1,\ldots,x_n)\in A^n$, and $\pi\in\Sn_n$.  Moreover,
\begin{equation}\label{eq:cyclic-fourier-decomposition}
\mathcal P_n(A)
\cong
\bigoplus_{\substack{\kappa\in\N_0^{\widehat A}\\
\sum_{\chi\in\widehat A}\kappa_\chi=n}}V_\kappa.
\end{equation}
\end{proposition}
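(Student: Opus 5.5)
Our plan is to work directly inside $\mathcal P_n(A)\cong\C[A^n]$ and apply the Fourier transform on $A^n$. For $z\in\widehat A^n$ let $f_z\in\C[A^n]$ be the function $f_z(y)=\overline{\ip{z}{y}}$; equivalently $f_z=\sum_{y}\overline{\ip{z}{y}}\,\delta_y$. The family $\{f_z\}_{z\in\widehat A^n}$ is an orthogonal basis of $\C[A^n]$, by orthogonality of characters of $A^n$. We then compute the two actions described in Section~\ref{subsec:wreath-quasiregular}.

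For $x\in A^n$, the base group acts by $x\cdot\delta_y=\delta_{xy}$. Hence
\[
x\cdot f_z=\sum_y\overline{\ip{z}{y}}\,\delta_{xy}=\sum_{y'}\overline{\ip{z}{x^{-1}y'}}\,\delta_{y'}=\ip{z}{x}f_z .
\]
For $\pi\in\Sn_n$, the action is $\pi\cdot\delta_y=\delta_{\pi\cdot y}$. The pairing is invariant under simultaneous permutation, $\ip{\pi\cdot z}{\pi\cdot y}=\ip{z}{y}$, and this gives $\pi\cdot f_z=f_{\pi\cdot z}$. Because $A$ is abelian, the multiplication in $W_n(A)$ involves no conjugation subtleties, and one checks that these formulas are compatible with the semidirect product law. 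The action of $\Sn_n$ preserves the multiplicity vector of $z$, so $\C[A^n]=\bigoplus_\kappa E_\kappa$, where $E_\kappa=\operatorname{span}\{f_z:z\in\Omega_\kappa\}$ is a $W_n(A)$-subrepresentation. Setting $\mathbf e_z=f_z$ then gives \eqref{eq:cyclic-orbit-action} on $E_\kappa$.

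The main step is to show that each $E_\kappa$ is irreducible and isomorphic to $V_\kappa$. For irreducibility, the characters $x\mapsto\ip{z}{x}$ for distinct $z$ are distinct characters of $A^n$, so every $A^n$-stable subspace of $E_\kappa$ is spanned by some of the $f_z$. Since $\Sn_n$ acts transitively on $\Omega_\kappa$, such a subspace is either $0$ or all of $E_\kappa$.

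To identify $E_\kappa$, we pick the sorted representative $z_0\in\Omega_\kappa$, whose first $\kappa_{\chi}$ entries equal one character, the next block equals the next character, and so on. Its stabiliser is the Young subgroup $\prod_\chi\Sn_{\kappa_\chi}$. The line $\C f_{z_0}$ is stable under $\prod_\chi(A\wr\Sn_{\kappa_\chi})$, and this group acts on it by $\boxtimes_\chi(\chi^{\otimes\kappa_\chi}\otimes\triv)$, which is the inducing datum in \eqref{eq:wreath-irrep-construction} with $\lambda^\chi=(\kappa_\chi)$. The translates of this line are indexed by cosets, because $|\Omega_\kappa|=n!/\prod\kappa_\chi!$ equals the index of that subgroup. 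By the characterisation of induced representations, $E_\kappa\cong V_\kappa$, and the dimension is $n!/\prod_\chi\kappa_\chi!$. Finally, \eqref{eq:cyclic-fourier-decomposition} is just $\C[A^n]=\bigoplus_\kappa E_\kappa$.

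The main obstacle is the bookkeeping of conventions. One has to get the conjugation and inverses right in $f_z$ so that the eigenvalue is exactly $\ip{z}{x}$ rather than its conjugate. One also has to check that the identification with \eqref{eq:wreath-irrep-construction} uses $\chi$ and not $\chi^{-1}$. If the signs come out reversed, we replace $f_z$ by $\overline{f_z}$.
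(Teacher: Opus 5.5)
Your proof is correct. Its first half is exactly the paper's argument: the unnormalised Fourier basis $f_z$, the formulas $x\cdot f_z=\ip{z}{x}f_z$ and $\pi\cdot f_z=f_{\pi\cdot z}$, the splitting $\C[A^n]=\bigoplus_\kappa E_\kappa$, and irreducibility from distinct $A^n$-characters together with transitivity on $\Omega_\kappa$.

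The routes differ in how $E_\kappa$ is identified with $V_\kappa$.

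\textbf{The paper's route.} It argues globally. The orbit spaces are pairwise non-isomorphic irreducibles exhausting $\mathcal P_n(A)$. Corollary~\ref{cor:cyclic-row-multipartitions}, which rests on the Schur--Weyl computation of Theorem~\ref{thm:coset-decomposition}, says the constituents of $\mathcal P_n(A)$ are exactly the $V_\kappa$, each occurring once. This identifies the two collections but leaves the label-by-label matching implicit; dimension alone cannot tell $\kappa$ from $\kappa^*$.

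\textbf{Your route.} You argue locally. The line $\C f_{z_0}$ carries the inducing character $\boxtimes_\chi(\chi^{\otimes\kappa_\chi}\otimes\triv)$ of $\prod_\chi(A\wr\Sn_{\kappa_\chi})$. Its translates are the independent lines $\C f_z$, $z\in\Omega_\kappa$, which are in bijection with the cosets. The standard criterion for induced representations then gives $E_\kappa\cong V_\kappa$ for the same $\kappa$.

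\textbf{What each buys.} Your version is self-contained and avoids Schur--Weyl. It yields the dimension and the precise labelling at once. The paper's version obtains multiplicity one and the completeness of the list of spherical representations from a theorem valid for every finite $G$.

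Your closing hedge is unnecessary. Your computation already produces the eigenvalue $\ip{z}{x}$ and the character $\chi$ (not $\chi^{-1}$) on each block of $z_0$. Compatibility with the semidirect-product law is automatic, because you are computing inside the genuine representation $\C[A^n]$.
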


\begin{proof}
Identify $\mathcal P_n(A)$ with $\C[A^n]$ as in Section~\ref{subsec:wreath-quasiregular}.  We use the standard Fourier basis of $\C[A^n]$; see~\cite[Chapter~2]{CSTDiscreteHarmonic}.  For $z=(\chi_1,\ldots,\chi_n)\in\widehat A^n$, put
\[
\mathbf e_z
=
\frac{1}{\abs{A}^{n/2}}
\sum_{x\in A^n}
\overline{\ip{z}{x}}\,\delta_{x}.
\]
A direct calculation gives \eqref{eq:cyclic-orbit-action}.  Hence the Fourier basis decomposes into the orbit spaces $\C[\Omega_\kappa]$, giving \eqref{eq:cyclic-fourier-decomposition}.

Each orbit space is irreducible.  Indeed, its restriction to $A^n$ is a direct sum of pairwise distinct one-dimensional characters, one for each $z\in\Omega_\kappa$.  Every $A^n$-invariant subspace is therefore spanned by a subset of the Fourier basis.  Invariance under $\Sn_n$, which acts transitively on $\Omega_\kappa$, forces that subset to be empty or the whole orbit.  Distinct multiplicity vectors give disjoint $\Sn_n$-orbits of characters, so the orbit spaces are pairwise non-isomorphic.  Corollary~\ref{cor:cyclic-row-multipartitions} identifies them with the representations $V_\kappa$.
\end{proof}

\begin{corollary}
The family consisting of $\Std_n$ and all non-trivial $V_\kappa$ has cardinality $\binom{n+r-1}{r-1}$, and the sum of the dimensions of its members is $r^n+n-2$.
\end{corollary}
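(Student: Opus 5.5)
The count is a pure combinatorial check, using Corollary~\ref{cor:cyclic-row-multipartitions} and Proposition~\ref{prop:cyclic-orbit-realisation}.

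\emph{Cardinality.} By Corollary~\ref{cor:cyclic-row-multipartitions}, the spherical irreducible representations of $A\wr\Sn_n$ are in bijection with multiplicity vectors $\kappa\in\N_0^{\widehat A}$ with $\sum_\chi\kappa_\chi=n$. Since $\abs{\widehat A}=r$, stars and bars gives $\binom{n+r-1}{r-1}$ such vectors. Removing the trivial vector $\kappa_\triv=n$ leaves $\binom{n+r-1}{r-1}-1$ non-trivial $V_\kappa$. Adding $\Std_n$ restores the count to $\binom{n+r-1}{r-1}$.

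This requires $\Std_n$ to be distinct from every $V_\kappa$. Its base group $A^n$ acts trivially, so any isomorphism would force $V_\kappa$ to contain only the trivial character of $A^n$. By \eqref{eq:cyclic-orbit-action}, $V_\kappa$ restricted to $A^n$ consists of the characters $z\in\Omega_\kappa$. So all of $V_\kappa$ would be the trivial character, which means $\Omega_\kappa=\{(\triv,\ldots,\triv)\}$, i.e.\ $\kappa$ is the trivial vector and $V_\kappa$ is one-dimensional. That contradicts $\dim\Std_n=n-1\geqslant1$ with $\Std_n$ non-trivial. This separation is the only step needing any argument.

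\emph{Dimensions.} Summing \eqref{eq:cyclic-fourier-decomposition} gives $\sum_\kappa\dim V_\kappa=\dim\mathcal P_n(A)=r^n$. Equivalently, $\sum_\kappa n!/\prod_\chi\kappa_\chi!=r^n$ by the multinomial theorem, or because the $\Omega_\kappa$ partition $\widehat A^n$. Subtracting $1$ for the trivial summand and adding $\dim\Std_n=n-1$ gives $r^n-1+n-1=r^n+n-2$, in agreement with the earlier dimension corollary.
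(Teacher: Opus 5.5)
Your proposal is correct and follows the paper's proof: count the multiplicity vectors by stars and bars, exchange the trivial vector for $\Std_n$, and use the multinomial identity (equivalently $\dim\mathcal P_n(A)=r^n$) for the dimensions. Your extra check that $\Std_n\not\cong V_\kappa$ is a detail the paper leaves implicit. There the contradiction comes simply from $\kappa$ being non-trivial, and the dimension remark is unnecessary (for $n=2$ both representations are one-dimensional).
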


\begin{proof}
There are $\binom{n+r-1}{r-1}$ multiplicity vectors on the $r$ characters of $A$.  We remove the vector indexing the trivial representation and add $\Std_n$, so the number is unchanged.  The multinomial identity $\sum_{\sum_\chi \kappa_\chi=n}n!/\prod_\chi \kappa_\chi!=r^n$ gives the sum of the dimensions.  Removing the trivial one-dimensional summand and adding the $(n-1)$-dimensional standard representation gives $r^n+n-2$.
\end{proof}

\section{Schr\"odinger operators on multislices}\label{sec:cyclic-blocks}

We now specialise to abelian base groups and realise the Laplacian on each spherical irreducible representation as a discrete Schr\"odinger operator on a multislice.

Let $A$ be a finite abelian group, and let $w_N=\sum_{x\in A^n}\mathsf c_x\,x\in\R[A^n]$ be symmetric and non-negative.  In the Fourier basis, the base-group Laplacian is multiplication by
\begin{equation}\label{eq:fourier-potential}
Q(z)
=
\sum_{x\in A^n}\mathsf c_x\bigl(1-\operatorname{Re}\ip{z}{x}\bigr);
\end{equation}
see~\cite[Chapter~2]{CSTDiscreteHarmonic}.  Thus $Q$ is real and non-negative, and $Q(z^{-1})=Q(z)$.

Let $w_T=\sum_{i<j}\mathsf a_{ij}(ij)$.  For a multiplicity vector $\kappa$, let $\mathcal L_\kappa$ be the operator on $\C[\Omega_\kappa]$ given by
\begin{equation}\label{eq:schrodinger-block}
(\mathcal L_\kappa f)(z)
=
\sum_{1\leqslant i<j\leqslant n}\mathsf a_{ij}
\bigl(f(z)-f((ij)\cdot z)\bigr)
+
Q(z)f(z).
\end{equation}

\begin{proposition}\label{prop:block-identification}
Let $\kappa$ be a multiplicity vector.  Then $V_\kappa(\Delta_{A\wr\Sn_n}(w_T+w_N))=\mathcal L_\kappa$ in the realisation of Proposition~\ref{prop:cyclic-orbit-realisation}.  If $V_\kappa$ is non-trivial, then $\psi_{A\wr\Sn_n}(w_T+w_N,V_\kappa)=\lambda_{\min}(\mathcal L_\kappa)$.
\end{proposition}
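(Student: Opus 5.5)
The plan is to compute the action of $\Delta_{A\wr\Sn_n}(w_T+w_N)$ on the basis $\{\mathbf e_z\}_{z\in\Omega_\kappa}$ of Proposition~\ref{prop:cyclic-orbit-realisation}, treating the transposition part and the base-group part separately. Throughout, we identify $\mathbf e_z$ with the indicator function $\delta_z\in\C[\Omega_\kappa]$. Under this identification, a vector $v=\sum_z f(z)\mathbf e_z$ corresponds to the function $f$.

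\emph{Transposition part.} By \eqref{eq:cyclic-orbit-action}, $(ij)\cdot\mathbf e_z=\mathbf e_{(ij)\cdot z}$. Hence
\[
V_\kappa\bigl(\Delta(w_T)\bigr)v=\sum_{i<j}\mathsf a_{ij}\sum_z f(z)\bigl(\mathbf e_z-\mathbf e_{(ij)\cdot z}\bigr).
\]
Reindex by $z\mapsto(ij)\cdot z$, which is a bijection of $\Omega_\kappa$ and an involution. The coefficient of $\mathbf e_z$ is then $\sum_{i<j}\mathsf a_{ij}\bigl(f(z)-f((ij)\cdot z)\bigr)$, which is the first term of \eqref{eq:schrodinger-block}.

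\emph{Base-group part.} By \eqref{eq:cyclic-orbit-action}, each $\mathbf e_z$ is an eigenvector of $x\in A^n$ with eigenvalue $\ip{z}{x}$. Therefore $\Delta(w_N)$ acts on $\mathbf e_z$ by the scalar $\sum_x\mathsf c_x\bigl(1-\ip{z}{x}\bigr)$. The symmetry $\mathsf c_x=\mathsf c_{x^{-1}}$ together with $\ip{z}{x^{-1}}=\overline{\ip{z}{x}}$ lets us pair $x$ with $x^{-1}$. The imaginary parts then cancel, so the scalar equals $Q(z)$ as defined in \eqref{eq:fourier-potential}. This gives the potential term $Q(z)f(z)$. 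Adding the two parts proves $V_\kappa(\Delta(w_T+w_N))=\mathcal L_\kappa$.

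\emph{Spectral gap.} For the second claim, take $V_\kappa$ non-trivial and irreducible. Then $V_\kappa^{A\wr\Sn_n}=0$, as noted after \eqref{eq:gap-irrep-minimum}, so $\psi(w,V_\kappa)$ is the least eigenvalue of the operator, namely $\lambda_{\min}(\mathcal L_\kappa)$. There is no real obstacle here. The only points needing care are the bookkeeping in the reindexing step and checking that the basis $\{\mathbf e_z\}$ is orthonormal. Orthonormality holds by the normalisation in the Fourier basis. It ensures that the matrix representation is the unitary one, so $\mathcal L_\kappa$ is self-adjoint with respect to the standard inner product on $\C[\Omega_\kappa]$ and $\lambda_{\min}$ is meaningful.
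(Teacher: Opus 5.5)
Your proposal is correct and follows the paper's proof, which likewise substitutes the actions \eqref{eq:cyclic-orbit-action} of $(ij)$ and $A^n$ on the basis $\{\mathbf e_z\}$ into the Laplacian; your reindexing step and the pairing of $x$ with $x^{-1}$ to produce $\operatorname{Re}\ip{z}{x}$ are exactly the details the paper leaves implicit. The orthonormality check is harmless but not needed: the least eigenvalue does not depend on the choice of basis, and $\mathcal L_\kappa$ is already real symmetric in the $\delta_z$ basis.
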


\begin{proof}
Equation~\eqref{eq:cyclic-orbit-action} gives the actions of $(ij)$ and $A^n$ on the basis $\{\mathbf e_z\}_{z\in\Omega_\kappa}$.  Substitution in the definition of the Laplacian gives \eqref{eq:schrodinger-block}.
\end{proof}

Combining Theorem~\ref{thm:universal-reduction}, Corollary~\ref{cor:cyclic-row-multipartitions}, and Proposition~\ref{prop:block-identification}, we obtain the following result.

\begin{theorem}\label{thm:cyclic-characterisation}
Let $A$ be a finite abelian group, let $n\geqslant2$, and let $w$ be as in \eqref{eq:intro-weight}.  Then
\begin{equation}\label{eq:explicit-cyclic-characterisation}
\psi_{A\wr\Sn_n}(w)
=
\min\left\{
\lambda_{\mathrm{rw}}(\mathsf a),
\min_{\substack{\sum_{\chi\in\widehat A}\kappa_\chi=n\,;\ 
V_\kappa\neq\triv}}
\lambda_{\min}(\mathcal L_\kappa)
\right\}.
\end{equation}
\end{theorem}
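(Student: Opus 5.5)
The proof is a direct combination of three earlier results; no new computation is needed. I will start from Theorem~\ref{thm:universal-reduction}. It expresses $\psi_{A\wr\Sn_n}(w)$ as the minimum of $\psi(w,\Std_n)$ and $\psi(w,V)$, where $V$ ranges over the non-trivial $\Sn_n$-spherical irreducible representations. Two identifications are then needed: the standard term equals $\lambda_{\mathrm{rw}}(\mathsf a)$, and each spherical term equals $\lambda_{\min}(\mathcal L_\kappa)$.

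For the spherical terms, I will invoke Corollary~\ref{cor:cyclic-row-multipartitions}. It says the spherical irreducibles of $A\wr\Sn_n$ are exactly the $V_\kappa$, with $\kappa$ ranging over multiplicity vectors, and that these are pairwise non-isomorphic. So the inner minimum runs over $\kappa$ with $V_\kappa\neq\triv$, that is, excluding the vector $\kappa_\triv=n$. For each such $\kappa$, Proposition~\ref{prop:block-identification} gives $\psi(w,V_\kappa)=\lambda_{\min}(\mathcal L_\kappa)$. This uses that a non-trivial irreducible has no fixed vectors, so the gap is the least eigenvalue on the whole space.

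For the standard term, the base group acts trivially on $\Std_n$, so $\psi(w,\Std_n)=\psi_{\Sn_n}(w_T,\Std_n)$, as already noted in the proof of Proposition~\ref{prop:domination}. I will identify this with $\lambda_{\mathrm{rw}}(\mathsf a)$ by realising $\Std_n$ as $\one^\perp\subset\C^n$, with $\Sn_n$ permuting coordinates. In the basis $\delta_i$, the operator $\sum_{i<j}\mathsf a_{ij}(1-(ij))$ acts as $L_{\mathsf a}$: the transposition $(ij)$ swaps $\delta_i$ and $\delta_j$, so $1-(ij)$ is the edge Laplacian of the edge $\{i,j\}$. Restricting to $\one^\perp$ then gives $\lambda_{\mathrm{rw}}(\mathsf a)$. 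The only point needing care is the case where $\Std_n$ carries no fixed vector; this holds because $\Std_n$ is non-trivial irreducible, so its gap is its least eigenvalue. This holds even when the weighted graph is disconnected: the value is then $0$ on both sides, consistent with the convention for $\lambda_{\mathrm{rw}}$.

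There is no real obstacle here. The only step requiring attention is the bookkeeping that the index set ``$V\in\Sph_n(A)$, $V\neq\triv$'' corresponds bijectively to ``$\sum_\chi\kappa_\chi=n$, $V_\kappa\neq\triv$''. This follows from the completeness and distinctness statements in Corollary~\ref{cor:cyclic-row-multipartitions} and Proposition~\ref{prop:cyclic-orbit-realisation}.
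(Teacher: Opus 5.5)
Your proposal is correct and follows the paper's own proof, which is exactly the combination of Theorem~\ref{thm:universal-reduction}, Corollary~\ref{cor:cyclic-row-multipartitions} and Proposition~\ref{prop:block-identification}. You also check explicitly that $\Std_n(\Delta_{\Sn_n}(w_T))$ is $L_{\mathsf a}$ restricted to $\one^\perp$, so that $\psi(w,\Std_n)=\lambda_{\mathrm{rw}}(\mathsf a)$; the paper uses this identification without stating it.
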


For $A=C_2$, the multiplicity vectors are $(n-k,k)$ and $V_{(n-k,k)}$ is the bipartition representation $((n-k),(k))$.  Thus \eqref{eq:explicit-cyclic-characterisation} recovers the characterisation of Alon and Ghosh~\cite[Theorem~1]{AlonGhosh}.

The single-coordinate transitions considered by Ghosh~\cite[Definition~1.1]{Ghosh} have
\[
w_N=\sum_{i=1}^n\mathsf y_i\sum_{g\in A}\mathsf c_g\,g^{(i)},
\qquad
\mathsf c_g=\mathsf c_{g^{-1}}\geqslant0,
\]
where $g^{(i)}$ has entry $g$ in coordinate $i$ and the identity elsewhere.  In this case,
\[
Q(z)
=
\sum_{i=1}^n\mathsf y_i\sum_{g\in A}\mathsf c_g
\bigl(1-\operatorname{Re}\chi_i(g)\bigr).
\]
Allowing arbitrary elements of $A^n$ gives a general non-negative potential on $\widehat A^n$.

\section{Minimal gap-determining families}\label{sec:minimality}

We prove that the reduction in Theorem~\ref{thm:universal-reduction} is sharp.  We first treat abelian base groups by Fourier inversion and then pass to arbitrary finite groups through the spherical Hecke algebra.

Let $G$ be a finite group.  A collection $\mathcal E\subseteq\Irr(W_n(G))\setminus\{\triv\}$ is \textbf{gap-determining} if $\psi_{W_n(G)}(w)=\min_{V\in\mathcal E}\psi_{W_n(G)}(w,V)$ for every $w$ of the form \eqref{eq:intro-weight}.  It is \textbf{minimal} if no proper subcollection has this property.

We begin with finite abelian groups.  For a multiplicity vector $\kappa$, let $(\kappa^*)_\chi=\kappa_{\chi^{-1}}$.  Then $\Omega_\kappa^{-1}=\Omega_{\kappa^*}$.

\begin{proposition}\label{prop:duality-isospectral}
Let $w$ be as in \eqref{eq:intro-weight}, and let $\kappa$ be a multiplicity vector.  Then $\mathcal L_\kappa$ and $\mathcal L_{\kappa^*}$ are isospectral, and $V_{\kappa^*}\cong V_\kappa^*$.  In particular, $V_\kappa$ and $V_{\kappa^*}$ have the same Laplacian spectrum for every real symmetric group-algebra element.
\end{proposition}

\begin{proof}
The unitary map $J:\C[\Omega_\kappa]\to\C[\Omega_{\kappa^*}]$ given by $(Jf)(z)=f(z^{-1})$ intertwines $\mathcal L_\kappa$ and $\mathcal L_{\kappa^*}$.  The assertion about contragredients follows from \eqref{eq:cyclic-orbit-action}.
\end{proof}

Let $T_n=\sum_{1\leqslant i<j\leqslant n}(ij)$.  The content formula for the transposition class sum~\cite[Example~3.15]{Ryba} gives the following spectral separation.

\begin{lemma}\label{lem:complete-transposition-gap}
Let $U$ be a non-trivial irreducible representation of $\Sn_n$.  Then $\Delta_{\Sn_n}(T_n)$ acts as the scalar $n$ on $\Std_n$; if $U\not\cong\Std_n$, every eigenvalue of $U(\Delta_{\Sn_n}(T_n))$ is strictly greater than $n$.
\end{lemma}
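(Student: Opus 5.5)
The plan is to use the Jucys--Murphy (content) description of the transposition class sum. Since $T_n$ is central in $\C[\Sn_n]$, Schur's lemma shows that $U(T_n)$ acts on the irreducible $U=S^\lambda$ as a scalar. The content formula~\cite[Example~3.15]{Ryba} identifies this scalar as
\[
c(\lambda)=\sum_{(a,b)\in\lambda}(b-a),
\]
the sum of the contents of the boxes of $\lambda$. Because $\Delta_{\Sn_n}(T_n)=\binom n2\cdot 1-T_n$, the Laplacian acts on $S^\lambda$ as the scalar $\binom n2-c(\lambda)$. All eigenvalues on an irreducible are therefore equal, and the lemma reduces to a combinatorial comparison of contents.

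For $\Std_n$, with shape $(n-1,1)$, the first row has contents $0,1,\dots,n-2$ and the second-row box has content $-1$. This gives $c=\binom{n-1}2-1$, so
\[
\binom n2-c=(n-1)+1=n,
\]
which establishes the first claim.

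For the second claim, let $\lambda\neq(n),(n-1,1)$ be a partition of $n$. We must show $c(\lambda)<\binom{n-1}2-1$. The planned route is to show that $c$ is strictly monotone in dominance order. Moving a box from a lower row to a higher row, producing a partition again, strictly increases the content: if the box moves from position $(a,b)$ to $(a',b')$ with $a'<a$, then $b'\geqslant b$ because the box is removed from the end of a lower row and added at the end of a higher one, which is at least as long. Hence $b'-a'>b-a$.

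It then suffices to check that every $\lambda\vdash n$ other than $(n)$ and $(n-1,1)$ is reached from $(n-1,1)$ by such moves in reverse. Equivalently, from such a $\lambda$ one can perform upward moves and arrive at $(n-1,1)$ before arriving at $(n)$. The procedure is to move boxes from the lower rows into the first row one at a time, keeping exactly one box outside the first row. This reaches $(n-1,1)$ after at least one strict move, since $\lambda\neq(n-1,1)$. The result is a strict inequality of scalars, so every eigenvalue of $U(\Delta_{\Sn_n}(T_n))$ exceeds $n$.

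The main obstacle is justifying that each intermediate step is a legitimate partition and that the content strictly increases at each step. Both follow by always moving the last box of the lowest row into the end of the first row, where the new content $\lambda_1$ exceeds the old content, which is at most $\lambda_1-2$ because the box left row $a\geqslant2$. A fallback is the closed formula
\[
c(\lambda)=\sum_i\left[\binom{\lambda_i}2-\binom{\lambda_i'}2\right]
\]
together with a direct estimate, but the dominance argument seems cleaner.
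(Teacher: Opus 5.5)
Your proof is correct and takes the same route as the paper, which simply invokes the content formula for the transposition class sum. You also supply the combinatorial comparison that the paper leaves implicit: moving the last box of the lowest row to the end of the first row raises the content by at least $2$ each time until $(n-1,1)$ is reached, so the dominance-order framing is unnecessary.
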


\subsection{Separating a prescribed one-row multipartition}

Fix a multiplicity vector $\kappa$ for which $V_\kappa$ is non-trivial.  For $x\in A^n$, let $\Phi_\kappa(x)=\abs{\Omega_\kappa}^{-1}\sum_{z\in\Omega_\kappa}\operatorname{Re}\ip{z}{x}$.  Given $C>\eps>0$, take $\mathsf c_x=C+\eps\Phi_\kappa(x)$.

\begin{lemma}\label{lem:separating-fourier-potential}
Let $C>\eps>0$.  Then the coefficients $\mathsf c_x$ are strictly positive, invariant under coordinate permutations, and satisfy $\mathsf c_x=\mathsf c_{x^{-1}}$.  Moreover, for every non-trivial $y\in\widehat A^n$,
\begin{equation}\label{eq:separating-potential}
Q(y)
=
\begin{cases}
C\abs{A}^n-\dfrac{\eps\abs{A}^n}{2\abs{\Omega_\kappa}},
& \kappa\neq\kappa^*,\ y\in\Omega_\kappa\cup\Omega_{\kappa^*},\\[2mm]
C\abs{A}^n-\dfrac{\eps\abs{A}^n}{\abs{\Omega_\kappa}},
& \kappa=\kappa^*,\ y\in\Omega_\kappa,\\[2mm]
C\abs{A}^n,
& \text{otherwise}.
\end{cases}
\end{equation}
\end{lemma}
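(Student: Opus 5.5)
The plan is to verify the three structural properties of $\mathsf c_x$ directly, and then compute $Q(y)$ from \eqref{eq:fourier-potential} using character orthogonality on $A^n$.

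\textbf{Structural properties.} Each $\ip{z}{x}$ is a root of unity, so $\abs{\Phi_\kappa(x)}\leqslant1$ and therefore $\mathsf c_x\geqslant C-\eps>0$. For permutation invariance, take $\pi\in\Sn_n$. We have $\ip{\pi\cdot z}{\pi\cdot x}=\ip{z}{x}$, and $\Omega_\kappa$ is $\Sn_n$-stable. Reindexing the sum over $z$ then gives $\Phi_\kappa(\pi\cdot x)=\Phi_\kappa(x)$. For symmetry, $\ip{z}{x^{-1}}=\overline{\ip{z}{x}}$, so the real parts agree and $\mathsf c_{x^{-1}}=\mathsf c_x$.

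\textbf{Computing $Q(y)$.} Substituting $\mathsf c_x=C+\eps\Phi_\kappa(x)$ into \eqref{eq:fourier-potential} splits $Q(y)$ into four sums. The orthogonality relation I will use is $\sum_{x\in A^n}\ip{u}{x}=\abs{A}^n[u=1]$ for $u\in\widehat A^n$.
\begin{itemize}
\item $C\sum_x 1=C\abs{A}^n$.
\item $-C\sum_x\operatorname{Re}\ip{y}{x}=0$, because $y$ is non-trivial.
\item $\eps\sum_x\Phi_\kappa(x)=0$, because $V_\kappa\neq\triv$ forces every $z\in\Omega_\kappa$ to be non-trivial.
\item $-\eps\sum_x\Phi_\kappa(x)\operatorname{Re}\ip{y}{x}$, which carries all the dependence on $y$.
\end{itemize}

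\textbf{The main term.} This is the only real computation. I will use the identity
\[
\operatorname{Re}a\,\operatorname{Re}b=\tfrac12\operatorname{Re}\bigl(ab+a\bar b\bigr),
\]
together with multiplicativity $\ip{z}{x}\ip{y}{x}=\ip{zy}{x}$. After interchanging the sums over $x$ and $z$, this gives
\[
\sum_{x}\operatorname{Re}\ip{z}{x}\operatorname{Re}\ip{y}{x}=\frac{\abs{A}^n}{2}\bigl([z=y^{-1}]+[z=y]\bigr).
\]
Hence the $y$-dependent term equals
\[
-\frac{\eps\abs{A}^n}{2\abs{\Omega_\kappa}}\,N(y),
\qquad
N(y)=\sum_{z\in\Omega_\kappa}\bigl([z=y]+[z=y^{-1}]\bigr).
\]

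\textbf{Counting $N(y)$.} This uses $\Omega_\kappa^{-1}=\Omega_{\kappa^*}$.
\begin{itemize}
\item If $\kappa\neq\kappa^*$, then $\Omega_\kappa$ and $\Omega_{\kappa^*}$ are disjoint. Exactly one indicator can fire, so $N(y)=1$ for $y\in\Omega_\kappa\cup\Omega_{\kappa^*}$.
\item If $\kappa=\kappa^*$ and $y\in\Omega_\kappa$, both indicators fire, so $N(y)=2$. This holds even when $y=y^{-1}$, since the two indicators are counted separately for the same $z$.
\item Otherwise $N(y)=0$.
\end{itemize}
These three cases give exactly the three lines of \eqref{eq:separating-potential}.

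The main point needing care is the case $y=y^{-1}$ when $\kappa=\kappa^*$. The two indicators are summed separately rather than counting elements, so they still contribute $2$ and no case split is needed there.
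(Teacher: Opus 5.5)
Your proof is correct and follows the paper's route. The structural properties come from $\abs{\Phi_\kappa(x)}\leqslant1$ and the $\Sn_n$- and inversion-symmetries of $\ip{z}{x}$. The formula for $Q(y)$ is the character-orthogonality computation showing that $\Phi_\kappa$ has Fourier support on $\Omega_\kappa\cup\Omega_{\kappa^*}$, which you carry out explicitly via $\operatorname{Re}a\,\operatorname{Re}b=\tfrac12\operatorname{Re}(ab+a\bar b)$ and the count $N(y)=[y\in\Omega_\kappa]+[y\in\Omega_{\kappa^*}]$.
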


\begin{proof}
The first assertion follows from $\abs{\Phi_\kappa(x)}\leqslant1$ and the symmetries of $\Omega_\kappa$.  Character orthogonality~\cite[Section~2.3]{Serre} shows that the Fourier transform of $\Phi_\kappa$ is supported on $\Omega_\kappa\cup\Omega_{\kappa^*}$.  Substitution in \eqref{eq:fourier-potential} gives \eqref{eq:separating-potential}; the two cases reflect whether these orbits are distinct.
\end{proof}

\begin{theorem}\label{thm:row-separation}
Let $A$ be a finite abelian group, and let $\kappa$ be a multiplicity vector for which $V_\kappa$ is non-trivial.  Then there is an element $w$ of the form \eqref{eq:intro-weight} for which $\psi_{A\wr\Sn_n}(w,U)=\psi_{A\wr\Sn_n}(w)$ holds precisely when $U\cong V_\kappa$ or $U\cong V_{\kappa^*}$.
\end{theorem}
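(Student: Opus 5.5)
We take $w=\beta T_n+w_N$, where $w_N=\sum_{x\in A^n}\mathsf c_x\,x$ has the coefficients $\mathsf c_x=C+\eps\Phi_\kappa(x)$ of Lemma~\ref{lem:separating-fourier-potential}. The constants $\beta>0$ and $C>\eps>0$ are fixed below. The strategy is to make the potential dominate on every spherical block except the orbits $\Omega_\kappa\cup\Omega_{\kappa^*}$. We then let the transposition part push $\Std_n$ and all non-spherical representations above the value attained on those two blocks.

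\textbf{Step 1: the spherical blocks.} By Proposition~\ref{prop:block-identification}, the Laplacian on each non-trivial $V_\lambda$ is $\mathcal L_\lambda=\beta\,\Delta(T_n)\vert_{\C[\Omega_\lambda]}+Q$. Since the coefficients $\mathsf c_x$ are invariant under coordinate permutations, Lemma~\ref{lem:separating-fourier-potential} shows that $Q$ is constant on each multislice. It takes the value $q_\kappa:=C\abs{A}^n-\delta$ on $\Omega_\kappa\cup\Omega_{\kappa^*}$, where $\delta=\eps\abs{A}^n/(2\abs{\Omega_\kappa})$ or $\eps\abs{A}^n/\abs{\Omega_\kappa}$ according to the two cases of the lemma. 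On every other non-trivial orbit it takes the value $C\abs{A}^n$. Note that the trivial vector $z=(\triv,\ldots,\triv)$ forms its own orbit, namely the trivial representation, which we exclude.

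The transposition Laplacian on $\C[\Omega_\lambda]$ is the Laplacian of the permutation action, so it is positive semidefinite and has the constant functions in its kernel. Hence $\lambda_{\min}(\mathcal L_\lambda)$ equals the constant value of $Q$ on $\Omega_\lambda$. Therefore $\psi(w,V_\kappa)=\psi(w,V_{\kappa^*})=q_\kappa$, which agrees with Proposition~\ref{prop:duality-isospectral}, while every other non-trivial $V_\lambda$ has $\psi=C\abs{A}^n>q_\kappa$.

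\textbf{Step 2: the non-spherical representations.} By Lemma~\ref{lem:complete-transposition-gap}, $\psi(w,\Std_n)=\beta n$. For any non-trivial $V$ with $V^{\Sn_n}=0$, we argue as in Proposition~\ref{prop:domination}: the base-group Laplacian is positive semidefinite, and every constituent of $V\!\downarrow_{\Sn_n}$ is non-trivial. Lemma~\ref{lem:complete-transposition-gap} then gives $\psi(w,V)\geqslant\beta n$. We choose $\beta$ with $\beta n>q_\kappa$, for instance $\beta=2C\abs{A}^n/n$. With this choice, $\Std_n$ and all non-spherical irreducibles have gap strictly larger than $q_\kappa$.

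\textbf{Step 3: conclusion.} By Theorem~\ref{thm:universal-reduction}, or directly from \eqref{eq:gap-irrep-minimum}, the gap is $\psi_{A\wr\Sn_n}(w)=q_\kappa$. It is attained exactly on $V_\kappa$ and $V_{\kappa^*}$, and these are the only isomorphism classes attaining it. The base weights are strictly positive and all transpositions carry the same weight $\beta$, as promised after Theorem~\ref{thm:intro-minimality}.

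The main point requiring care is Step 1: the claim that $\lambda_{\min}(\mathcal L_\lambda)$ equals the constant value of the potential. This relies on permutation invariance of $\mathsf c_x$, which makes $Q$ constant on each orbit, together with the constant function lying in $\C[\Omega_\lambda]$. The rest consists of routine comparisons of constants.
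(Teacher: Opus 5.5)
Your proposal is correct and follows essentially the same argument as the paper: the same Fourier-inverted weights $\mathsf c_x=C+\eps\Phi_\kappa(x)$, the observation that $Q$ is constant on each multislice so that $\lambda_{\min}(\mathcal L_\lambda)$ equals that constant, and control of $\Std_n$ and the non-spherical representations via Lemma~\ref{lem:complete-transposition-gap}. The only difference is your transposition rate $\beta=2C\abs{A}^n/n$ in place of the paper's $C\abs{A}^n/n$. This merely adds slack, since either choice puts $\Std_n$ and the non-spherical representations strictly above $q_\kappa$.
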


\begin{proof}
Choose $C>\eps>0$ and the corresponding weights $\mathsf c_x=C+\eps\Phi_\kappa(x)$.  Let $w_N=\sum_{x\in A^n}\mathsf c_x\,x$, $w_T=C\abs{A}^nT_n/n$, and $w=w_T+w_N$.  The base-group weights are strictly positive and symmetric.

Equation~\eqref{eq:separating-potential} shows that $Q$ is strictly less than $C\abs{A}^n$ on $\Omega_\kappa\cup\Omega_{\kappa^*}$ and equals $C\abs{A}^n$ on every other non-trivial multislice.  On each $V_\mu$, the transposition Laplacian is positive semidefinite and annihilates the constant vector.  Thus precisely $V_\kappa$ and $V_{\kappa^*}$ have least eigenvalue below $C\abs{A}^n$ among the spherical irreducible representations.

Lemma~\ref{lem:complete-transposition-gap} gives $\psi(w,\Std_n)=C\abs{A}^n$, and Proposition~\ref{prop:domination} gives the same lower bound outside the spherical family.  The result follows.
\end{proof}

\subsection{Separating the lifted standard representation}

We next show that the lifted standard representation can be separated from every other irreducible representation.

\begin{theorem}\label{thm:standard-separation}
Let $G$ be a finite group.  Then there is an element $w$ of the form \eqref{eq:intro-weight} for which $\psi_{W_n(G)}(w,U)=\psi_{W_n(G)}(w)$ holds precisely when $U\cong\Std_n$.
\end{theorem}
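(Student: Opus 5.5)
We take $w=w_T+w_N$ with $w_T=T_n$ and $w_N=C\sum_{g\in G^n}g$ for a large constant $C>0$; this $w_N$ is symmetric with strictly positive coefficients. Lemma~\ref{lem:complete-transposition-gap} shows that $\Delta(T_n)$ acts on $\Std_n$ as the scalar $n$. Since $G^n$ acts trivially on $\Std_n$, we get $\psi_{W_n(G)}(w,\Std_n)=n$. It then suffices to show that every other non-trivial irreducible $U$ has $\psi_{W_n(G)}(w,U)>n$.

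We split into two cases according to whether $G^n$ acts trivially on $U$.

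\emph{Case 1: $U^{G^n}=U$.} Then $U$ is a lift of a non-trivial irreducible $\Sn_n$-representation, and $w_N$ acts as zero. Since $U\not\cong\Std_n$, Lemma~\ref{lem:complete-transposition-gap} gives every eigenvalue strictly greater than $n$.

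\emph{Case 2: $U^{G^n}\neq U$.} The element $e_0=\abs{G}^{-n}\sum_{g\in G^n}g$ acts on $U$ as the orthogonal projection onto $U^{G^n}$. Hence
\[
U(\Delta(w_N))=C\abs{G}^n\bigl(I-U(e_0)\bigr),
\]
which equals $C\abs{G}^n$ on $(U^{G^n})^\perp$ and $0$ on $U^{G^n}$. The subspace $U^{G^n}$ is $\Sn_n$-invariant, because $\Sn_n$ normalises $G^n$. It is also a proper subspace. The key observation is that it is then zero: $U^{G^n}$ is in fact $W_n(G)$-invariant, since $G^n$ acts trivially on it and $\Sn_n$ preserves it, so irreducibility of $U$ forces $U^{G^n}=0$. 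Therefore $U(\Delta(w_N))=C\abs{G}^nI$. Since $\Delta(T_n)$ is positive semidefinite, every eigenvalue of $U(\Delta(w))$ is at least $C\abs{G}^n$. Any choice with $C\abs{G}^n>n$ therefore gives $\psi(w,U)>n$.

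This yields $\psi_{W_n(G)}(w)=n$, attained exactly at $U\cong\Std_n$. Any irreducible $U$ isomorphic to $\Std_n$ attains it trivially. No duality issue arises, since $\Std_n$ is self-dual.

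The step that needs the most care is Case 2, namely the observation that $U^{G^n}$ is a subrepresentation and hence $0$ or $U$. Once that is in place, the uniform lower bound follows immediately. If one prefers to avoid irreducibility there, one can instead use the min-max bound $\Delta(w)\geqslant \Delta(T_n)+C\abs{G}^n(I-P)$ with $P$ commuting with $\Delta(T_n)$. The direct argument above is simpler.
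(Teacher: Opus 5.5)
Your proposal is correct and is essentially the paper's own proof. It uses the same weight $w_T=T_n$, $w_N=C\sum_{x\in G^n}x$ with $C\abs{G}^n>n$, and the averaging projection plus normality of $G^n$ to force $U^{G^n}$ to be $0$ or $U$. It then applies Lemma~\ref{lem:complete-transposition-gap} to the lifted representations, exactly as the paper does.
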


\begin{proof}
Choose $C>0$ with $C\abs{G}^n>n$, and set $w_T=T_n$ and $w_N=C\sum_{x\in G^n}x$.
In an irreducible representation $V$ of $W_n(G)$, the operator $\abs{G}^{-n}\sum_{x\in G^n}V(x)$ is the orthogonal projection onto $V^{G^n}$~\cite[Section~4.3]{Serre}.  Since $G^n$ is normal, this fixed subspace is invariant under the whole wreath product.  Hence it is either zero or all of $V$.

If $G^n$ acts non-trivially on $V$, then $V^{G^n}=0$ and the base-group Laplacian is the scalar $C\abs{G}^n$.  Therefore $\psi(w,V)\geqslant C\abs{G}^n>n$.

If $G^n$ acts trivially on $V$, then $V$ is lifted from an irreducible representation of $\Sn_n$, and the base-group Laplacian vanishes.  Lemma~\ref{lem:complete-transposition-gap} shows that the least non-trivial value is $n$, with equality only for $\Std_n$.
\end{proof}

Combining Theorems~\ref{thm:cyclic-characterisation}, \ref{thm:row-separation}, and~\ref{thm:standard-separation} with Proposition~\ref{prop:duality-isospectral}, we obtain the abelian classification.

\begin{theorem}
Let $\mathcal M_n(A)$ consist of $\Std_n$ and one representation $V_\kappa$ from each duality orbit of non-trivial multiplicity vectors.  Then $\mathcal M_n(A)$ is gap-determining and minimal.  Moreover, every gap-determining family contains $\Std_n$ and at least one of $V_\kappa$ and $V_{\kappa^*}$ for each non-trivial multiplicity vector $\kappa$.
\end{theorem}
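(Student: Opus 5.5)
The statement has three parts: $\mathcal M_n(A)$ is gap-determining, it is minimal, and every gap-determining family has the stated form. I will deduce each part directly from the earlier results and prove no new estimates.

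\emph{Gap-determining.} Fix $w$ of the form \eqref{eq:intro-weight}. By Theorem~\ref{thm:cyclic-characterisation}, or equivalently Theorem~\ref{thm:universal-reduction} together with Corollary~\ref{cor:cyclic-row-multipartitions}, the gap $\psi_{A\wr\Sn_n}(w)$ is the minimum of $\psi(w,\Std_n)$ and of $\psi(w,V_\kappa)$ over all non-trivial $\kappa$. Since $\kappa^{**}=\kappa$, the non-trivial multiplicity vectors split into duality orbits of size one or two. The trivial vector is self-dual, so no orbit contains it. Proposition~\ref{prop:duality-isospectral} gives $\psi(w,V_\kappa)=\psi(w,V_{\kappa^*})$. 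Hence the minimum over all non-trivial $\kappa$ equals the minimum over one representative per orbit, which is exactly the minimum over $\mathcal M_n(A)$.

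\emph{Necessity.} Let $\mathcal E$ be gap-determining.
\begin{itemize}
\item[(i)] Take the $w$ given by Theorem~\ref{thm:standard-separation}. Its gap is attained only at $\Std_n$, so $\min_{V\in\mathcal E}\psi(w,V)=\psi(w)$ forces $\Std_n\in\mathcal E$.
\item[(ii)] Fix a non-trivial $\kappa$ and take the $w$ given by Theorem~\ref{thm:row-separation}. Its gap is attained only at $V_\kappa$ or $V_{\kappa^*}$. Since $\mathcal E$ is a finite set, its minimum is attained at some member, and that member must be $V_\kappa$ or $V_{\kappa^*}$.
\end{itemize}

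\emph{Minimality.} Any proper subfamily of $\mathcal M_n(A)$ omits either $\Std_n$ or the unique representative of some duality orbit $\{\kappa,\kappa^*\}$. In the first case it violates~(i); in the second it contains neither $V_\kappa$ nor $V_{\kappa^*}$ and violates~(ii). So no proper subfamily is gap-determining.

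\emph{Main point to check.} Theorem~\ref{thm:row-separation} must list every representation attaining the gap, including non-spherical ones, and must exclude $\Std_n$. Its proof gives only the lower bound $C\abs{A}^n$ for the standard and non-spherical representations, with equality for $\Std_n$. This suffices: the gap equals $\lambda_{\min}(\mathcal L_\kappa)$, which is strictly less than $C\abs{A}^n$ because $\mathcal L_\kappa$ applied to the constant vector gives a Rayleigh quotient below $C\abs{A}^n$. Every representation outside $V_\kappa$ and $V_{\kappa^*}$ therefore lies strictly above the gap. I take this as established by that theorem and apply it directly.
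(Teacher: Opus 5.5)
Your proposal is correct and follows the paper's route: the paper obtains this theorem by combining Theorem~\ref{thm:cyclic-characterisation} and Proposition~\ref{prop:duality-isospectral} for the gap-determining property with the separation Theorems~\ref{thm:row-separation} and~\ref{thm:standard-separation} for necessity and minimality, exactly as you do. Your extra check that $\lambda_{\min}(\mathcal L_\kappa)<C\abs{A}^n$, using the constant test vector on $\C[\Omega_\kappa]$, makes explicit a step the paper leaves implicit in its proof of Theorem~\ref{thm:row-separation}.
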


The cardinality of a minimal gap-determining family can also be read from the inversion action on $\widehat A$.  Write $\widehat A[2]=\{\chi\in\widehat A:\chi^2=\triv\}$ for the subgroup of characters of order at most two.

\begin{corollary}
Let $A$ be a finite abelian group of order $r$.  Every minimal gap-determining family for $A\wr\Sn_n$ has cardinality
\[
\frac12\left(
\binom{n+r-1}{r-1}
+
[t^n](1-t)^{-\abs{\widehat A[2]}}
(1-t^2)^{-(r-\abs{\widehat A[2]})/2}
\right).
\]
\end{corollary}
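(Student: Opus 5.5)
The count will follow from the abelian classification theorem just proved: every minimal gap-determining family consists of $\Std_n$ together with exactly one representative from each duality orbit $\{\kappa,\kappa^*\}$ of non-trivial multiplicity vectors. So the cardinality is $1+N'$, where $N'$ is the number of such orbits.

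First, we note that $\Omega_{\triv}$ (the vector with $\kappa_\triv=n$) is self-dual. Hence the number of non-trivial orbits is $N-1$, where $N$ is the total number of orbits of the involution $\kappa\mapsto\kappa^*$ on all multiplicity vectors. The cardinality is therefore exactly $N$.

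Next, we apply Burnside's lemma for the group of order two: $N=\tfrac12(\#\{\kappa\}+\#\{\kappa:\kappa=\kappa^*\})$. The first count is $\binom{n+r-1}{r-1}$ by stars and bars.

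For the fixed points, inversion on $\widehat A$ fixes the $s=\abs{\widehat A[2]}$ characters of order at most two and pairs the remaining $r-s$ characters into $(r-s)/2$ two-element orbits. A vector $\kappa$ is self-dual exactly when $\kappa_\chi=\kappa_{\chi^{-1}}$ for every $\chi$. So $\kappa$ is determined by arbitrary values $\kappa_\chi\geqslant0$ on the $s$ fixed characters and a common value $m\geqslant0$ on each pair, with each pair contributing $2m$ to the total $n$. The generating function is therefore $(1-t)^{-s}(1-t^2)^{-(r-s)/2}$, and the number of fixed points is its coefficient of $t^n$.

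Substituting both counts gives the stated formula. The only point needing care is to confirm that minimal families contain exactly one member per orbit, neither zero nor two. This is supplied by the preceding theorem, together with Theorem~\ref{thm:row-separation} and Proposition~\ref{prop:duality-isospectral}, which show that including both $V_\kappa$ and $V_{\kappa^*}$ is redundant. Beyond that, the argument is pure counting and presents no real obstacle.
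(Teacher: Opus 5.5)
Your proposal is correct and follows essentially the same route as the paper: Burnside's lemma for the inversion involution on multiplicity vectors, the generating function $(1-t)^{-s}(1-t^2)^{-(r-s)/2}$ for the self-dual vectors, and the observation that discarding the (self-dual) trivial orbit while adding $\Std_n$ leaves the count unchanged. Your explicit argument that a minimal family has exactly one member per duality orbit is a point the paper leaves implicit: every family of the form $\mathcal M_n(A)$ is already gap-determining, so minimality excludes anything further. One small correction is that the self-dual object is the trivial multiplicity vector with $\kappa_\triv=n$, not the multislice $\Omega_\triv$.
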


\begin{proof}
Inversion fixes the $\abs{\widehat A[2]}$ characters in $\widehat A[2]$ and partitions the remaining characters into $(r-\abs{\widehat A[2]})/2$ pairs.  A multiplicity vector fixed by inversion assigns arbitrary multiplicities to the fixed characters and equal multiplicities to the two characters in each pair.  Its generating function is therefore $(1-t)^{-\abs{\widehat A[2]}}(1-t^2)^{-(r-\abs{\widehat A[2]})/2}$.
Burnside's orbit-counting lemma~\cite[Section~2.3]{CameronPermutationGroups} gives the displayed number of orbits of the set of all multiplicity vectors.  Removing the orbit corresponding to the trivial representation and adding the lifted standard representation leaves the cardinality unchanged.
\end{proof}

\subsection{General finite groups}

We now extend the separation argument to arbitrary finite groups, replacing Fourier inversion by the spherical Hecke algebra.

For a complex representation $V$ of a finite group, let $V^*$ denote its contragredient representation.  For $a=\sum_x\mathsf c_x\,x$ in a complex group algebra, write $a^*=\sum_x\overline{\mathsf c_x}\,x^{-1}$ and $\overline a=\sum_x\overline{\mathsf c_x}\,x$.

The following observation explains why representations can be separated only up to duality.

\begin{proposition}\label{prop:complex-conjugate-spectra}
Let $H$ be a finite group, let $w\in\R[H]$ be symmetric and non-negative, and let $V$ be a complex representation of $H$.  Then $V(\Delta_H(w))$ and $V^*(\Delta_H(w))$ are isospectral.
\end{proposition}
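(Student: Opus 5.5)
The plan is to compare the two operators in matrix form. Take a unitary realisation $\rho$ of $V$ with respect to an orthonormal basis. The contragredient $V^*$ is then realised, in the dual basis, by $\rho^*(h)=\overline{\rho(h)}$, the entrywise complex conjugate. This holds because $\rho^*(h)=\rho(h^{-1})^{\mathsf T}$, and unitarity gives $\rho(h^{-1})=\rho(h)^{-1}=\overline{\rho(h)}^{\mathsf T}$.

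Next I will use that $w$ has real coefficients $\mathsf c_h$. Then $\Delta_H(w)=\sum_h\mathsf c_h(1_H-h)$ is an element of $\R[H]$, and extending $\rho^*$ linearly gives
\[
V^*(\Delta_H(w))=\sum_{h\in H}\mathsf c_h\bigl(I-\overline{\rho(h)}\bigr)=\overline{\rho(\Delta_H(w))}.
\]
So the matrix of the Laplacian on $V^*$ is the entrywise conjugate of its matrix on $V$.

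To finish, note that $\rho(\Delta_H(w))$ is Hermitian by \eqref{eq:dirichlet-form}, since $w$ is symmetric. Its eigenvalues are therefore real. The characteristic polynomial of $\overline M$ is the conjugate of that of $M$, so conjugate matrices have conjugate eigenvalues with the same multiplicities. Real eigenvalues are fixed by conjugation, so the two operators are isospectral. An equivalent route is that $\overline M=M^{\mathsf T}$ for Hermitian $M$, and a matrix and its transpose always share a spectrum.

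There is no real obstacle here. The only point needing care is the identification of the contragredient with the entrywise conjugate of a unitary realisation, which relies on the paper's standing convention that all representations are unitary. Symmetry of $w$ is what makes the spectrum real. Non-negativity is not needed for this statement, so I will remark that the argument works for any real symmetric $w$.
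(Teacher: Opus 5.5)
Your proposal is correct and matches the paper's proof: realise $V^*$ by the entrywise conjugate $\overline\rho$ of a unitary realisation, use the real coefficients to get $\overline\rho(\Delta_H(w))=\overline{\rho(\Delta_H(w))}$, and conclude that conjugation fixes the real spectrum of this Hermitian matrix. Your added justification that $\overline\rho$ realises the contragredient is accurate, and so is your remark that non-negativity of $w$ is never used.
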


\begin{proof}
Choose a unitary matrix realisation $\rho$ of $V$; then $\overline\rho$ realises $V^*$.  Since the coefficients of $\Delta_H(w)$ are real, $\overline\rho(\Delta_H(w))=\overline{\rho(\Delta_H(w))}$.
Both matrices are Hermitian, so their eigenvalues are real.  Complex conjugation therefore preserves the spectrum.
\end{proof}

Thus a gap-determining family need not contain both members of a duality orbit.

Let $e_n=n!^{-1}\sum_{\sigma\in\Sn_n}\sigma$.  The \textbf{spherical Hecke algebra} of $(W_n(G),\Sn_n)$ is $\mathcal H_n(G)=e_n\C[W_n(G)]e_n$.  Equivalently, it is the convolution algebra of functions constant on the double cosets of $\Sn_n$; see~\cite[Sections~1.2.2--1.2.4]{CeccheriniScarabottiTolli}.

\begin{lemma}
Let $G$ be a finite group.  Then the Wedderburn decomposition~\cite[Section~6.2]{Serre} restricts to an algebra isomorphism
\begin{equation}\label{eq:hecke-decomposition}
\mathcal H_n(G)
\cong
\bigoplus_{U\in\Sph_n(G)}\End(U^{\Sn_n}).
\end{equation}
Moreover, $e_n\C[W_n(G)]e_n=e_n\C[G^n]e_n$.  Under \eqref{eq:hecke-decomposition}, the $U$-block of $h\in\mathcal H_n(G)$ is the restriction of $U(h)$ to $U^{\Sn_n}$.
\end{lemma}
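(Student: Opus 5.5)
The statement has three parts. First, the Wedderburn decomposition restricts to the isomorphism \eqref{eq:hecke-decomposition}. Second, $e_n\C[W_n(G)]e_n=e_n\C[G^n]e_n$. Third, the $U$-block of $h\in\mathcal H_n(G)$ is the restriction of $U(h)$ to $U^{\Sn_n}$. I would start from the Wedderburn isomorphism $\C[W_n(G)]\cong\bigoplus_{U\in\Irr(W_n(G))}\End(U)$, $a\mapsto(U(a))_U$, and compress both sides by the idempotent $e_n$.

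\textbf{Step 1 (corner of each block).} Since $e_n$ is a self-adjoint idempotent ($e_n^*=e_n$, $e_n^2=e_n$), the operator $U(e_n)$ is the orthogonal projection $P_U$ of $U$ onto $U^{\Sn_n}$. This is the same averaging fact used in the proof of Theorem~\ref{thm:standard-separation}. The image of $e_n\C[W_n(G)]e_n$ is therefore $\bigoplus_U P_U\End(U)P_U$. Each summand $P_U\End(U)P_U$ identifies with $\End(U^{\Sn_n})$ by restricting operators to $U^{\Sn_n}$.

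\textbf{Step 2 (discard non-spherical blocks).} The summand is zero exactly when $U^{\Sn_n}=0$, so only $U\in\Sph_n(G)$ survive; this is consistent with Lemma~\ref{lem:frobenius-spherical}. The Wedderburn map is an algebra isomorphism, and compression by $e_n$ is compatible with it. Its restriction to the corner algebra $\mathcal H_n(G)$ is therefore an algebra isomorphism onto $\bigoplus_{U\in\Sph_n(G)}\End(U^{\Sn_n})$. The identity element of the corner is $e_n$, which maps to $(\mathrm{id}_{U^{\Sn_n}})_U$. This step also yields the third assertion: for $h=e_nhe_n$ we have $U(h)=P_UU(h)P_U$, so the $U$-block of $h$ is the restriction of $U(h)$ to $U^{\Sn_n}$.

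\textbf{Step 3 (base group suffices).} The inclusion $e_n\C[G^n]e_n\subseteq e_n\C[W_n(G)]e_n$ is clear. For the reverse inclusion, every element of $W_n(G)$ can be written as $(g;\pi)=g\pi$ with $g\in G^n$ and $\pi\in\Sn_n$. Since $\pi e_n=e_n$, we get $e_n(g\pi)e_n=e_nge_n$. Spanning over all elements gives $e_n\C[W_n(G)]e_n=e_n\C[G^n]e_n$.

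None of these steps is a real obstacle. The only point needing care is Step 2: one must check that the corner of a direct sum of matrix algebras is the direct sum of the corners, and that the unit matches, so that the restricted map is a unital algebra isomorphism.
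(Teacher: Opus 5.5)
Your proposal is correct and follows the paper's proof essentially step for step. Both compress the Wedderburn decomposition by $e_n$, identify $U(e_n)$ with the orthogonal projection onto $U^{\Sn_n}$, discard the blocks with $U^{\Sn_n}=0$, and use $\sigma e_n=e_n$ for the base-group reduction; your explicit checks of the unit and of the block description of $h$ only spell out details the paper leaves implicit.
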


\begin{proof}
The Wedderburn decomposition~\cite[Section~6.2]{Serre} gives $\C[W_n(G)]\cong\bigoplus_{U\in\Irr(W_n(G))}\End(U)$.
In the $U$-summand, the element $e_n$ is the orthogonal projection from $U$ onto $U^{\Sn_n}$.  Hence
\[
e_n\C[W_n(G)]e_n
\cong
\bigoplus_{U\in\Irr(W_n(G))}
U(e_n)\End(U)U(e_n)
\cong
\bigoplus_{U\in\Sph_n(G)}\End(U^{\Sn_n}),
\]
which proves \eqref{eq:hecke-decomposition}.

Every element of $W_n(G)=G^n\Sn_n$ has the form $x\sigma$ with $x\in G^n$ and $\sigma\in\Sn_n$.  Since $\sigma e_n=e_n$, we have $e_nx\sigma e_n=e_nx e_n$.  The elements $x\sigma$ span $\C[W_n(G)]$, so the asserted equality follows.
\end{proof}

For $b\in\R[G^n]$, denote the coefficient of $x$ by $b_x$.  We also require a real symmetric lift from the spherical Hecke algebra.

\begin{lemma}\label{lem:real-hecke-lift}
Let $p\in\mathcal H_n(G)$ be self-adjoint and fixed by coefficient conjugation.  Then there is an element $b\in\R[G^n]$ which commutes with $\Sn_n$ and satisfies $e_nbe_n=p$ and $b_x=b_{x^{-1}}$ for every $x\in G^n$.
\end{lemma}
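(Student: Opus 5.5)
Since $p\in\mathcal H_n(G)=e_n\C[G^n]e_n$ by the preceding lemma, I would first choose some $a\in\C[G^n]$ with $e_nae_n=p$, and then symmetrise it in three steps so that it keeps this property while gaining the required invariances.

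\emph{Step 1: averaging over $\Sn_n$.} Put $a_1=n!^{-1}\sum_{\sigma\in\Sn_n}\sigma a\sigma^{-1}$. This lies in $\C[G^n]$ because $\Sn_n$ normalises $G^n$. Its coefficients satisfy $(a_1)_{\sigma\cdot x}=(a_1)_x$, so $a_1$ commutes with every $\sigma$. Since $e_n\sigma=e_n=\sigma^{-1}e_n$, we have $e_n\sigma a\sigma^{-1}e_n=e_nae_n$, and hence $e_na_1e_n=p$.

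\emph{Step 2: making coefficients real.} Put $a_2=\tfrac12(a_1+\overline{a_1})$. Coefficient conjugation is a ring automorphism of $\C[W_n(G)]$ that fixes $e_n$. Therefore $e_n\overline{a_1}e_n=\overline{p}=p$, which gives $e_na_2e_n=p$. The element $a_2$ has real coefficients and still commutes with $\Sn_n$, because conjugation commutes with the $\Sn_n$-action on coefficients.

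\emph{Step 3: making coefficients symmetric.} Put $b=\tfrac12(a_2+a_2^*)$. The map $*$ is an anti-automorphism with $e_n^*=e_n$. Hence $e_na_2^*e_n=(e_na_2e_n)^*=p^*=p$, using that $p$ is self-adjoint, and so $e_nbe_n=p$. Because $a_2$ is real, $a_2^*=\sum_x (a_2)_x x^{-1}$, and thus $b_x=\tfrac12\bigl((a_2)_x+(a_2)_{x^{-1}}\bigr)=b_{x^{-1}}$. The element $b$ is real. It commutes with $\Sn_n$ because $(\sigma\cdot x)^{-1}=\sigma\cdot x^{-1}$, so inversion respects the permutation action.

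The only step that needs care is checking that each averaging operation preserves the identity $e_n(\cdot)e_n=p$. This reduces to the invariance of $e_n$ under conjugation by $\Sn_n$, under coefficient conjugation, and under $*$, together with the two hypotheses $\overline p=p$ and $p^*=p$. I do not expect any genuine obstacle beyond this verification.
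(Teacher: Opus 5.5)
Your proof is correct and is essentially the paper's argument. The paper also averages a preimage over conjugation by $\Sn_n$ and then takes $b=\frac14\bigl(b_1+b_1^*+\overline{b_1}+\overline{b_1^*}\bigr)$, which is exactly the element your two successive averagings (real part, then $*$-symmetrisation) produce.
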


\begin{proof}
By the preceding lemma, choose $b_0\in\C[G^n]$ with $e_nb_0e_n=p$, and average it under conjugation by $\Sn_n$, setting $b_1=n!^{-1}\sum_{\sigma\in\Sn_n}\sigma b_0\sigma^{-1}$.  Since $e_n$ is fixed by coefficient conjugation and by the involution $a\mapsto a^*$, the element $b=\frac14(b_1+b_1^*+\overline{b_1}+\overline{b_1^*})$
still satisfies $e_nbe_n=p$.  Its coefficients are real and invariant under inversion, and the first averaging ensures that it commutes with $\Sn_n$.
\end{proof}

The preceding two lemmas now give the required separation.

\begin{theorem}\label{thm:general-spherical-separation}
Let $V\in\Sph_n(G)\setminus\{\triv\}$.  Then there is an element $w$ of the form \eqref{eq:intro-weight} for which $\psi_{W_n(G)}(w,U)=\psi_{W_n(G)}(w)$ holds precisely when $U\cong V$ or $U\cong V^*$.
\end{theorem}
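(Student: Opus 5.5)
We mimic the proof of Theorem~\ref{thm:row-separation}, replacing Fourier inversion by the spherical Hecke algebra. The idea is to build a base-group element whose Laplacian acts as a large constant on every spherical representation except $V$ and $V^*$, where it is slightly smaller, and then add a transposition term that handles $\Std_n$.

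First, use the decomposition \eqref{eq:hecke-decomposition} to define $p\in\mathcal H_n(G)$. Its block is $\mathrm{id}$ on $V^{\Sn_n}$, and also on $(V^*)^{\Sn_n}$ when $V^*\not\cong V$; all other blocks are zero. Then $p$ is self-adjoint, because the decomposition is a $*$-isomorphism in unitary realisations and $p$ is a sum of block identities. To see that $p$ is fixed by coefficient conjugation, note that coefficient conjugation $a\mapsto\overline a$ permutes the central idempotents of $\C[W_n(G)]$ by $U\mapsto \overline U\cong U^*$. Hence the sum of the minimal central idempotents for $V$ and $V^*$, multiplied by $e_n$ on both sides, is conjugation-invariant; and $e_n$ itself is real. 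Lemma~\ref{lem:real-hecke-lift} now gives a real symmetric $b\in\R[G^n]$ that commutes with $\Sn_n$ and satisfies $e_nbe_n=p$.

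Next set
\[
w_N=\sum_{x\in G^n}\bigl(C+\eps b_x\bigr)x,\qquad w_T=\frac{C\abs{G}^n}{n}\,T_n,
\]
with $C$ large and $\eps>0$ small enough that all coefficients are positive. We compute the base Laplacian on each spherical $U$. The term $C\sum_x x$ acts as $C\abs{G}^n$ times the projection onto $U^{G^n}$. For non-trivial spherical $U$ we have $U^{G^n}=0$: otherwise $G^n$ acts trivially, so $U$ is lifted from $\Sn_n$ and is spherical for $\Sn_n$, hence trivial. So this term contributes the scalar $C\abs{G}^n$.

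The remaining term contributes $-\eps U(b)$ plus the constant $\eps\sum_x b_x$, which is harmless since it is the same for every $U$; we can absorb it by adjusting $C$, or note that it equals the trivial block of $p$, namely $0$. For the Rayleigh quotient, take $v\in U^{\Sn_n}$. Then $\ip{U(b)v}{v}=\ip{U(e_nbe_n)v}{v}=\ip{U(p)v}{v}$, which equals $\norm v^2$ for $U\in\{V,V^*\}$ and $0$ otherwise. Also $U(\Delta(w_T))v=0$. Hence $\psi(w,V)\leqslant C\abs{G}^n-\eps$.

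The main obstacle is the matching lower bound for the other representations. Here $U(b)$ need not vanish off $U^{\Sn_n}$, so the Laplacian on $U$ is not controlled just by its $\Sn_n$-fixed part. To handle this, write $L=\Delta(w_T)+C\abs{G}^n-\eps U(b)$ and use that $b$ commutes with $\Sn_n$. Then $U(b)$ preserves both $U^{\Sn_n}$ and its orthogonal complement, and so does $\Delta(w_T)$. On $U^{\Sn_n}$ we get the exact value $C\abs{G}^n-\eps\,\mathrm{id}$ or $C\abs{G}^n$.

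On the complement, every $\Sn_n$-constituent is non-trivial, so $\Delta(w_T)\geqslant C\abs{G}^n$ by Lemma~\ref{lem:complete-transposition-gap} and the scaling of $w_T$. Therefore $L\geqslant 2C\abs{G}^n-\eps\norm b$ there. Choosing $\eps\norm{b}<C\abs{G}^n$ makes this strictly above $C\abs{G}^n$.

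It follows that non-trivial spherical $U\not\cong V,V^*$ have gap exactly $C\abs{G}^n$, while $V$ and $V^*$ have gap exactly $C\abs{G}^n-\eps$. For $\Std_n$ the base Laplacian vanishes, so its gap is $C\abs{G}^n$. Every non-spherical $U$ has gap at least $C\abs{G}^n$ by Proposition~\ref{prop:domination}, since its base part is positive semidefinite and $\psi(w,\Std_n)=C\abs{G}^n$. Thus the minimum $C\abs{G}^n-\eps$ is attained precisely at $V$ and $V^*$; the equality $\psi(w,V^*)=\psi(w,V)$ is also consistent with Proposition~\ref{prop:complex-conjugate-spectra}.
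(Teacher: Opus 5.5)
Your proposal is correct and follows essentially the same route as the paper: the same idempotent $p$ in the spherical Hecke algebra, the same real symmetric lift $b$ from Lemma~\ref{lem:real-hecke-lift} with $\sum_x b_x=0$, the same weights $w_N=\sum_x(C+\eps b_x)\,x$ and $w_T=C\abs{G}^nT_n/n$, and the same splitting of each $U$ into $U^{\Sn_n}$ and its non-trivial $\Sn_n$-isotypic complement. The differences are minor. You check conjugation-invariance of $p$ explicitly via the minimal central idempotents, and you handle non-spherical representations by citing Proposition~\ref{prop:domination} instead of the paper's direct case analysis. Your phrase ``otherwise $G^n$ acts trivially'' relies, without saying so, on the dichotomy $U^{G^n}\in\{0,U\}$; this holds because $G^n$ is normal in $W_n(G)$, and you should state it.
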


\begin{proof}
Let $\mathcal O$ be the set of distinct isomorphism classes among $V$ and $V^*$.  Under \eqref{eq:hecke-decomposition}, let $p_{\mathcal O}$ be the central idempotent that acts as the identity on $U^{\Sn_n}$ for $U\in\mathcal O$ and as zero on $U^{\Sn_n}$ for every other spherical irreducible representation $U$.  The element $p_{\mathcal O}$ is self-adjoint.  It is also fixed by coefficient conjugation, because coefficient conjugation interchanges the blocks indexed by $U$ and $U^*$.

By Lemma~\ref{lem:real-hecke-lift}, choose $b\in\R[G^n]$ which commutes with $\Sn_n$ and satisfies $e_nbe_n=p_{\mathcal O}$.  Write $b=\sum_{x\in G^n}b_x\,x$; then $b_x=b_{x^{-1}}$.  Since $p_{\mathcal O}$ vanishes in the trivial spherical block, $\sum_{x\in G^n}b_x=0$.

Choose $C>0$ and then $\eta>0$ so small that $C+\eta b_x>0$ for every $x\in G^n$ and
$\eta\bigl(1+\sum_x\abs{b_x}\bigr)<C\abs{G}^n/2$.  Let $w_N=\sum_x(C+\eta b_x)\,x$, $w_T=C\abs{G}^nT_n/n$, and $w=w_T+w_N$.  Thus $w$ has the form \eqref{eq:intro-weight}, with strictly positive base-group weights.

Let $U$ be an irreducible representation of $W_n(G)$.  Since $G^n$ is normal, $U^{G^n}$ is either zero or all of $U$.  The averaging operator over $G^n$~\cite[Section~4.3]{Serre}, together with $\sum_x b_x=0$, gives
\begin{equation}\label{eq:general-diagonal-laplacian}
U(\Delta(w_N))
=
\begin{cases}
0,&U^{G^n}=U,\\[1mm]
C\abs{G}^nI-\eta U(b),&U^{G^n}=0.
\end{cases}
\end{equation}
Moreover, $U(b)$ is self-adjoint and $\norm{U(b)}\leqslant\sum_{x\in G^n}\abs{b_x}$.  Because $b$ commutes with $\Sn_n$, it preserves every $\Sn_n$-isotypic component of $U$.  On the $\Sn_n$-fixed subspace, the identity $e_nbe_n=p_{\mathcal O}$ gives
\begin{equation}\label{eq:b-on-fixed-space}
U(b)\big|_{U^{\Sn_n}}
=
\begin{cases}
I,&U\in\mathcal O,\\
0,&U\in\Sph_n(G)\setminus\mathcal O.
\end{cases}
\end{equation}

Consider first a non-trivial $\Sn_n$-isotypic component of $U$.  By Lemma~\ref{lem:complete-transposition-gap}, the transposition Laplacian is at least $C\abs{G}^nI$ on this component.  If $U^{G^n}=0$, then \eqref{eq:general-diagonal-laplacian} shows that the full Laplacian is greater than $C\abs{G}^n$, because $2C\abs{G}^n-\eta\sum_{x\in G^n}\abs{b_x}>C\abs{G}^n$.
If $U^{G^n}=U$, then $U$ is lifted from $\Sn_n$; its gap is $C\abs{G}^n$ when $U=\Std_n$ and is strictly greater than $C\abs{G}^n$ for every other non-trivial lifted irreducible representation.

A non-trivial spherical irreducible representation cannot be trivial on $G^n$, because a lifted irreducible representation has an $\Sn_n$-fixed vector only when it is trivial.  Hence \eqref{eq:general-diagonal-laplacian} and \eqref{eq:b-on-fixed-space} show that the least eigenvalue on $U^{\Sn_n}$ is $C\abs{G}^n-\eta$ for $U\in\mathcal O$, and is $C\abs{G}^n$ for every other non-trivial spherical $U$.  All eigenvalues on non-trivial $\Sn_n$-isotypic components are strictly greater than $C\abs{G}^n$.  Thus equality with the spectral gap holds precisely for the members of $\mathcal O$.
\end{proof}

In Theorem~\ref{thm:general-spherical-separation}, every base-group weight may be chosen strictly positive and all transpositions may be assigned the same positive rate.  If $V\cong V^*$, no other non-trivial irreducible representation has the same gap.

Combining Theorems~\ref{thm:universal-reduction}, \ref{thm:standard-separation}, and~\ref{thm:general-spherical-separation} with Proposition~\ref{prop:complex-conjugate-spectra}, we obtain the following classification.

\begin{theorem}\label{thm:general-minimal-family}
Let $\mathcal M_n(G)$ consist of $\Std_n$ and one representation from each duality orbit in $\Sph_n(G)\setminus\{\triv\}$.  Then $\mathcal M_n(G)$ is gap-determining and minimal.  Moreover, every gap-determining family contains $\Std_n$ and meets every duality orbit in $\Sph_n(G)\setminus\{\triv\}$.
\end{theorem}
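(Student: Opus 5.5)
The proof has two parts: $\mathcal M_n(G)$ is gap-determining, it is minimal, and every gap-determining family meets the stated requirements. The last assertion implies minimality, since any proper subcollection of $\mathcal M_n(G)$ either omits $\Std_n$ or misses some duality orbit entirely. So I would establish the gap-determining property first and then the necessity statement.

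\emph{Gap-determining.} Fix $w$ of the form \eqref{eq:intro-weight}. By Theorem~\ref{thm:universal-reduction}, $\psi_{W_n(G)}(w)$ is the minimum of $\psi(w,\Std_n)$ and $\psi(w,V)$ over non-trivial $V\in\Sph_n(G)$.

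First I would note that the duality orbits partition $\Sph_n(G)\setminus\{\triv\}$. Indeed, $V^*$ is irreducible and $(V^*)^{\Sn_n}\cong (V^{\Sn_n})^*\neq0$, so $V^*$ is again spherical, and it is non-trivial whenever $V$ is. Proposition~\ref{prop:complex-conjugate-spectra} gives $\psi(w,V)=\psi(w,V^*)$. Hence the minimum over all non-trivial spherical representations equals the minimum over one representative per orbit, which is exactly $\min_{V\in\mathcal M_n(G)}\psi(w,V)$.

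Next I need $\psi(w)\leqslant\psi(w,V)$ for every member $V$ of $\mathcal M_n(G)$. This holds by \eqref{eq:gap-irrep-minimum}, because each member is a non-trivial irreducible representation. Together with the previous paragraph, this gives equality.

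\emph{Necessity.} Let $\mathcal E$ be gap-determining. Take the $w$ of Theorem~\ref{thm:standard-separation}. Then $\psi(w,U)>\psi(w)$ for every $U\not\cong\Std_n$, so the minimum over $\mathcal E$ can equal $\psi(w)$ only if $\Std_n\in\mathcal E$.

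Similarly, for a non-trivial spherical $V$, take the $w$ of Theorem~\ref{thm:general-spherical-separation}. Equality $\psi(w,U)=\psi(w)$ then holds only for $U\cong V$ or $U\cong V^*$. The minimum over $\mathcal E$ is attained at some member, since $\mathcal E$ is a finite non-empty collection, so $\mathcal E$ must contain $V$ or $V^*$, i.e.\ it meets the orbit of $V$.

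\emph{Minimality.} Removing any element of $\mathcal M_n(G)$ either removes $\Std_n$ or leaves some orbit with no representative. In both cases the resulting family is no longer gap-determining by the necessity statement.

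The only delicate point is that duality preserves the set of non-trivial spherical representations, so that orbits lie inside $\Sph_n(G)\setminus\{\triv\}$. This follows from $(V^*)^{\Sn_n}\cong(V^{\Sn_n})^*$, which holds because taking $\Sn_n$-invariants commutes with taking duals for finite groups over $\C$.
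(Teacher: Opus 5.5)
Your proposal is correct and follows the paper's route: the paper proves this theorem by combining Theorem~\ref{thm:universal-reduction} and Proposition~\ref{prop:complex-conjugate-spectra} for the gap-determining property with Theorems~\ref{thm:standard-separation} and~\ref{thm:general-spherical-separation} for necessity, which then gives minimality. Your check that duality preserves $\Sph_n(G)\setminus\{\triv\}$, via $(V^*)^{\Sn_n}\cong(V^{\Sn_n})^*$, fills in a point the paper leaves implicit.
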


Duality is transparent in the multipartition parametrisation.  The dual multipartition $\Lambda^*$ is given by $(\Lambda^*)^\gamma=\lambda^{\gamma^*}$, where $\gamma^*$ denotes the contragredient of $\gamma\in\Gamma$.  Since Specht modules may be realised over $\R$~\cite[Section~2.1]{JamesKerber}, \eqref{eq:wreath-irrep-construction} gives $V_\Lambda^*\cong V_{\Lambda^*}$.

\begin{samepage}
\begin{corollary}
Let $a_n(G)$ be the number of $\Sn_n$-spherical irreducible representations of $W_n(G)$, and let $f_n(G)$ be the number of these representations self-dual.  Then every minimal gap-determining family for $W_n(G)$ has cardinality $(a_n(G)+f_n(G))/2$.  Moreover,
\begin{align}
a_n(G)
&=[t^n]
\prod_{\gamma\in\Gamma}
\prod_{j=1}^{d_\gamma}\frac{1}{1-t^j},
\label{eq:general-spherical-count}\\
f_n(G)
&=[t^n]
\prod_{\substack{\gamma\in\Gamma\\\gamma\cong\gamma^*}}
\prod_{j=1}^{d_\gamma}\frac{1}{1-t^j}
\prod_{\{\gamma,\gamma^*\}:\,\gamma\not\cong\gamma^*}
\prod_{j=1}^{d_\gamma}\frac{1}{1-t^{2j}}.
\label{eq:general-fixed-count}
\end{align}
In the second product of \eqref{eq:general-fixed-count}, each unordered dual pair is taken once.
\end{corollary}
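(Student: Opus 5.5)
The first claim follows from Theorem~\ref{thm:general-minimal-family}. A minimal gap-determining family consists of $\Std_n$ together with exactly one representative from each duality orbit in $\Sph_n(G)\setminus\{\triv\}$. Duality is an involution on $\Sph_n(G)$: $V^*$ is spherical because $(V^*)^{\Sn_n}\cong(V^{\Sn_n})^*$. Its orbits on $\Sph_n(G)$ therefore number $(a_n(G)+f_n(G))/2$ by Burnside's lemma, or simply by counting fixed points and $2$-cycles. The trivial representation is a self-dual singleton orbit. Removing it and adjoining $\Std_n$ leaves the count unchanged, exactly as in the abelian corollary.

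For \eqref{eq:general-spherical-count} we use Corollary~\ref{cor:spherical-classification}. Spherical representations correspond bijectively to multipartitions $(\lambda^\gamma)$ of $n$ with $\len(\lambda^\gamma)\leqslant d_\gamma$ for every $\gamma$. Partitions with at most $d$ rows are equinumerous, via conjugation, with partitions whose parts are at most $d$. Their generating function is therefore $\prod_{j=1}^{d}(1-t^j)^{-1}$, and taking the product over $\gamma\in\Gamma$ (sizes add) gives \eqref{eq:general-spherical-count}.

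For \eqref{eq:general-fixed-count} we use the stated rule $V_\Lambda^*\cong V_{\Lambda^*}$ with $(\Lambda^*)^\gamma=\lambda^{\gamma^*}$. Since the $V_\Lambda$ are pairwise non-isomorphic, $V_\Lambda$ is self-dual if and only if $\lambda^\gamma=\lambda^{\gamma^*}$ for all $\gamma$. Contragredience is an involution on $\Gamma$ preserving degrees. For self-dual $\gamma$ the partition $\lambda^\gamma$ is unconstrained apart from $\len\leqslant d_\gamma$, contributing $\prod_j(1-t^j)^{-1}$. For each unordered pair $\{\gamma,\gamma^*\}$ with $\gamma\not\cong\gamma^*$ we choose a single partition $\lambda$ with $\len(\lambda)\leqslant d_\gamma=d_{\gamma^*}$ and use it twice, so it contributes $2\abs{\lambda}$ to the total size. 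Substituting $t\mapsto t^2$ gives $\prod_j(1-t^{2j})^{-1}$. Multiplying these factors and extracting $[t^n]$ yields \eqref{eq:general-fixed-count}.

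The only step needing care is to confirm that the duality rule $V_\Lambda^*\cong V_{\Lambda^*}$ and the sphericity criterion are compatible, namely that $\Lambda^*$ again satisfies the row bounds. This holds because $d_{\gamma^*}=d_\gamma$. One must also check that the involution used in Theorem~\ref{thm:general-minimal-family} is the same one being counted here; it is, since both are contragredience. Everything else is routine generating-function bookkeeping.
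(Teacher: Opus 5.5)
Your proposal is correct and follows essentially the same route as the paper: the cardinality comes from counting duality orbits on $\Sph_n(G)$ (removing the trivial orbit and adjoining $\Std_n$), and the generating functions come from Corollary~\ref{cor:spherical-classification} together with the rule $V_\Lambda^*\cong V_{\Lambda^*}$. Your extra remarks, namely partition conjugation for the row-bounded generating function and the equality $d_{\gamma^*}=d_\gamma$, only make explicit steps that the paper leaves implicit.
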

\end{samepage}

\begin{proof}
By Corollary~\ref{cor:spherical-classification}, the component $\lambda^\gamma$ may be any partition with at most $d_\gamma$ parts.  Multiplying the corresponding partition-generating functions gives \eqref{eq:general-spherical-count}.  A multipartition is self-dual precisely when $\lambda^\gamma=\lambda^{\gamma^*}$ for every $\gamma$.  A self-dual irreducible representation of $G$ contributes the same factor as before, while a non-self-dual pair contributes only even total sizes, giving \eqref{eq:general-fixed-count}.

Burnside's orbit-counting lemma~\cite[Section~2.3]{CameronPermutationGroups} shows that duality has $(a_n(G)+f_n(G))/2$ orbits on the set of all spherical irreducible representations.  The trivial orbit is removed from the spherical family and $\Std_n$ is added, so the cardinality remains unchanged.
\end{proof}

\end{document}